\newtheorem{thm}{Theorem}[section]
\newtheorem{cor}[thm]{Corollary}
\newtheorem{prop}[thm]{Proposition}
\newtheorem{lem}[thm]{Lemma}
\newtheorem*{prob}{Problem}
\theoremstyle{definition}
\newtheorem{defn}[thm]{Definition}
\theoremstyle{remark}
\newtheorem{rem}[thm]{Remark}
\pgfplotsset{compat=1.15}
\pgfplotsset{compat=newest}
\let\c@equation\c@thm
\numberwithin{equation}{section}
\newcommand{\C}{\mathbb{C}}
\newcommand{\R}{\mathbb{R}}
\newcommand{\arcosh}{\mathrm{arcosh}}
\newcommand{\al}{\alpha}
\newcommand{\be}{\beta}
\newcommand{\e}{\eta}
\newcommand{\Ham}{\mathcal{H}}
\newcommand{\sld}{\mathfrak{sl}_2(\R)}
\newcommand{\tr}{\mathrm{tr}}
\newcommand{\SLD}{\mathrm{SL}_2(\R)}
\newcommand{\D}{\mathrm{d}}
\newcommand{\I}{\mathrm{Id}}
\newcommand{\arctanh}{\mathrm{arctanh}}
\newcommand{\pa}{\partial}
\newcommand{\la}{\lambda}
\newcommand{\U}{\mathcal{U}}
\newcommand{\A}{\mathcal{A}}
\newcommand{\tu}{\Tilde{u}}
\newcommand{\vp}{\varphi}
\newcommand{\dvp}{\dot{\varphi}}
\newcommand{\de}{\dot{\eta}}
\newcommand{\calM}{\mathcal{M}}
\newcommand\numberthis{\addtocounter{equation}{1}\tag{\theequation}}
\newcommand\ddfrac[2]{\frac{\displaystyle #1}{\displaystyle #2}}
\title{Lyapunov Exponents of Linear Switched Systems}
\author{Andrei A. Agrachev, Michele Motta \thanks{SISSA, Trieste}}
\begin{document}

\maketitle

\begin{abstract}
\noindent
We explicitly compute the maximal Lyapunov exponent for a switched system on $\mathrm{SL}_2(\mathbb R)$. This computation is reduced to the characterization of optimal trajectories for an optimal control problem on the Lie group.
\\[10pt]
\noindent \textbf{Keywords.} Control Theory, Optimal Control, Switched system, Dynamical Systems, Lyapunov Exponents 
\\

\end{abstract}

\tableofcontents

\newpage

\begin{section}{Introduction}
    Let
    \begin{equation}
    \dot y=A(t)y,\quad y\in\mathbb R^n, 
    \end{equation}
    be a linear system of ordinary differential equations; here $t\mapsto A(t),\ t\ge 0,$ is a measurable bounded family of $n\times n$-matrices. Let $X(t)$ be the fundamental matrix of system (1.1), the solutions of (1.1) have a form:
    $y(t)=X(t)y(0)$.
    
    Principal Lyapunov exponent of system (1.1) is defined as follows:
    \begin{equation}
    \label{eq1}
    \ell(A(\cdot))=\limsup\limits_{t\to\infty}\frac 1t\ln\|X(t)\|.
    \end{equation}

    This quantity does not depend on the choice of norm in the space of matrices and is a natural measure of instability of system (1.1).
    Indeed, system (1.1) is asymptotically stable with an exponential convergence rate if and only if $\ell(A(\cdot))<0$.

    Principal Lyapunov exponents play a key role in the general theory of Dynamical Systems, see \cite{Viana}, \cite{Pesin}.
    
    If $A(t)\equiv A$ is a constant matrix, then $\ell(A)$ is maximum of the real parts of the eigenvalues of $A$. Moreover, for any $A(\cdot)$ and any $c\in\mathbb R$, we have $\ell(A(\cdot)+cI)=\ell(A(\cdot))+c$.
    
    In what follows, we use standard notations: $\mathfrak{gl}_n(\mathbb R)$ is Lie algebra of all real $n\times n$-matrices and
    $\mathrm{GL}_n(\mathbb R)$ is Lie group of all nondegenerate real $n\times n$-matrices. Similarly $\mathfrak{sl}_n(\mathbb R)$
    is Lie algebra of all real $n\times n$-matrices with zero trace and $\mathrm{SL}_n(\mathbb R)$ is Lie group of all real $n\times n$-matrices whose determinant equals 1.
    
    Let $S\subset\mathfrak{sl}_n(\mathbb R)$ be a compact subset. A switched system induced by $S$ is the set of systems of ordinary differential equations
    $$
    \dot y=A(t)y,\quad A(t)\in S,\ 0\le t<\infty,
    $$
    where $t\mapsto A(t),\ t\ge 0,$ is any measurable map with values in $S$.
    
    Let $\mathcal A_S\subset L_\infty([0,\infty);\mathrm{GL}_n\mathbb R))$ be the set of all such maps. Principal Lyapunov
    exponent of the switched system is defined as follows:
    $$
    \ell(S)=\sup\limits_{A(\cdot)\in\mathcal A_S}\ell(A(\cdot))\,.
    $$
    Clearly, $\ell(S)\ge\sup\limits_{A\in S}\ell(A)$\,.
    
    We say that the switched system is exponentially stable if $\ell(S)<0$. Stability of switched dynamical systems is really important for applications and there is a big literature devoted to this subject, see for instance \cite{AgLi}, \cite{LibSwSyst}, \cite{Lib2023},
    and references there.
    
    A more natural mathematical problem is to compute or at least to estimate principal Lyapunov exponent of any switched system; many results about stability can be naturally generalized in this direction.
    
    In particular, paper \cite{AgLi} is about stability but if you analyse proofs of the main results you will find the characterization of Lie subalgebras
    $\mathfrak g\subset\mathfrak{gl}_n(\mathbb R)$ such that
    \begin{equation}
    \label{Lyaaap}
            \ell (S)
            =
            \sup
            \limits_{A\in S}
            \ell(A),
            \quad 
            \forall
            \, 
            S
            \subset 
            \mathfrak 
            g
            \,
            .
    \end{equation}

    Namely, a Lie subalgebra $\mathfrak g\subset\mathfrak{gl}_n(\mathbb R)$ has property \eqref{Lyaaap} if and only if $\mathfrak g$ does not contain
    a subalgebra isomorphic to $\mathfrak{sl}_2(\mathbb R)$.
    
    In the current paper, we explicitly compute $\ell(\{A,B\})$\footnote{In what follows we use shortened notation
    $\ell(A,B)\doteq\ell(\{A,B\})$} for any pair of matrices $A,B$ which generates Lie algebra $\mathfrak{sl}_2(\mathbb R)$.
    Such a formula automatically provides us with an explicit expression of $\ell(A,B)$ for matrices $A,B$ of any size if
    these matrices generate a Lie algebra isomorphic to $\mathfrak{sl}_2(\mathbb R)$.
    
    Indeed, let $\Psi:\mathfrak{sl}_2(\mathbb R)\to Lie(A,B)$ be such isomorphism; then $\Psi$ is a representation of $\mathfrak{sl}_2(\mathbb R)$. The representation $\Psi$ is a direct sum of irreducible representations,
    $\Psi=\bigoplus\limits_{i=1}^k\Psi_i$, where $\Psi_i:\mathfrak{sl}_2(\mathbb R)\to\mathfrak{sl}_{n_i}(\mathbb R),$ $i=1,\ldots,k,$
     $n_1\ge\cdots\ge n_k\ge 2$. Then
     $$
     \ell(A,B)=(n_1-1)\ell(\Psi^{-1}(A),\Psi^{-1}(B)).
     $$
    
     We hope that our computation will serve as a building block for eventual estimates of the Lyapunov exponents in much more general cases.
\\
    Our final result can be stated as follows. To simplify the notation, set $a=\tr(A^2),b=\tr(B^2),c=\tr(AB)$.
    \begin{thm}[Main Theorem]
        Suppose that $A,B\in\sld$, with $a\geq b$. 
        \begin{enumerate}
            \item If $a\geq 0$ and $ c>a$ or $a<0$ and $ c\geq\sqrt{a b}$, then 
            \begin{equation*}
                \ell(A,B)
                =
                \frac{1}{2}                
                \sqrt{
                    \frac{
                            c^2 - ab
                        }
                    {
                          c-\frac{a+b}{2}
                    }
                };
            \end{equation*} 
            \item If $a\geq 0$ and $ c\leq a$, then 
            \begin{equation*}
                \ell(A,B)
                =
                \sqrt{
                    \frac{a}{2}
                };
            \end{equation*}
            \item If $a<0$ and $ c\leq-\sqrt{a b}$, then 
            \begin{equation*}
                \ell(A,B)
                =
                \frac{2}{\pi}
                \frac{
                    1
                }{
                    \sqrt{\frac{-2}{a}}
                    +
                    3\sqrt{\frac{-2}{ b}}
                }
                \arcosh
                \left(
                    \frac{- c}{\sqrt{a b}}
                \right).
            \end{equation*}
        \end{enumerate}
    \end{thm}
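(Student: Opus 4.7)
My plan is to recast $\ell(A,B)$ as an infinite-horizon optimal control problem on the projective line and to attack it via the Pontryagin Maximum Principle. Parametrizing $\R P^1\simeq S^1=\R/\pi\Z$ by an angle $\theta$, a solution $y(t)=r(t)e_{\theta(t)}$ of $\dot y=u(t)y$, with $e_\theta=(\cos\theta,\sin\theta)^\top$, obeys
\[
\tfrac{d}{dt}\ln r(t)=f_{u(t)}(\theta(t)),\qquad \dot\theta(t)=g_{u(t)}(\theta(t)),
\]
where $f_M(\theta)=\langle e_\theta,Me_\theta\rangle$ and $g_M(\theta)=\langle e_{\theta+\pi/2},Me_\theta\rangle$ are explicit $\pi$-periodic trigonometric polynomials attached to $M\in\sld$. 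Therefore
\[
\ell(A,B)=\sup_{u(\cdot),\,\theta(0)}\limsup_{T\to\infty}\tfrac1T\int_0^T f_{u(t)}(\theta(t))\,dt,
\]
an infinite-horizon OCP on the compact manifold $S^1$. Standard relaxation enlarges the control set to $\conv\{A,B\}$ without affecting $\ell$, and compactness together with Krylov--Bogolyubov reduces the supremum to a maximum over periodic $(\theta,u)$-orbits of the relaxed system.

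The invariants $a,b,c$ are the Gram entries of $(A,B)$ for the Lorentzian form $\langle X,Y\rangle=\tfrac12\tr(XY)$ of signature $(2,1)$ on $\sld$, and the three cases correspond to three geometric configurations of $\mathrm{span}(A,B)$. \emph{Case 2} ($a\geq 0$, $c\leq a$): $A$ is hyperbolic and its unstable ray globally dominates. I would prove $\ell(A,B)=\sqrt{a/2}$ by exhibiting a continuous $V:S^1\to\R$ satisfying
\[
V'(\theta)g_A(\theta)+f_A(\theta)\leq\sqrt{a/2},\qquad V'(\theta)g_B(\theta)+f_B(\theta)\leq\sqrt{a/2}
\]
pointwise on $S^1$, with $b\leq a$ and $c\leq a$ playing the role of the exact conditions of simultaneous feasibility. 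Integrating along any $(\theta,u)$-trajectory gives $\ln r(t)\leq\sqrt{a/2}\,t+O(1)$, while the constant control $u\equiv A$ starting at the unstable ray of $A$ saturates the bound.

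In \emph{Cases 1 and 3} one has $\ell(A,B)>\max(\sqrt{a/2},0)$ strictly, and the optimum is a genuinely switched periodic orbit. I would apply PMP with Lagrange parameter $\mu=\ell(A,B)$: the optimal trajectory satisfies $\dot\theta=g_{u^*}(\theta)$, $\dot\psi=-\partial_\theta H(\theta,\psi,u^*)$, with $u^*\in\{A,B\}$ maximizing $H(\theta,\psi,u)=\psi g_u(\theta)+f_u(\theta)$ and $H\equiv\mu$ along the extremal. Because $A$ and $B$ generate $\sld$, the switching function $\Phi=H_A-H_B$ is a nontrivial trigonometric polynomial in $(\theta,\psi)$ whose zeros are isolated, ruling out singular arcs and forcing a bang-bang extremal with finitely many switches per period. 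In Case 1 the closed extremal is a two-arc cycle joining the unstable ray of $A$ to the unstable ray of a second hyperbolic element of $\mathrm{span}(A,B)$; algebraic integration of the period and of the per-period growth in the invariants $(a,b,c)$ collapses to $\tfrac12\sqrt{(c^2-ab)/(c-(a+b)/2)}$. In Case 3 both $g_A$ and $g_B$ have constant sign, so $\theta$ rotates monotonically on every arc, and PMP transversality selects an asymmetric periodic pattern whose per-period integration yields the $\arcosh$ formula, with $\arcosh(-c/\sqrt{ab})$ identifiable as the Lorentzian angle between the two timelike generators in a common cone.

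The main obstacle is upgrading the PMP necessary conditions to global optimality in Cases 1 and 3. I would close the argument by producing, in each case, a Lipschitz subsolution $V$ of the averaged Hamilton--Jacobi inequality
\[
\max\{V'(\theta)g_A(\theta)+f_A(\theta),\;V'(\theta)g_B(\theta)+f_B(\theta)\}\leq\mu,
\]
saturated along the PMP extremal, which certifies that no other admissible control exceeds $\mu$. Case 3 is the most delicate: the purely rotational base dynamics offer no stationary anchor for $V$, and the asymmetric time allocation visible in the denominator $\sqrt{-2/a}+3\sqrt{-2/b}$ must be singled out from a continuum of candidate periodic patterns by solving a transcendental transversality equation in the switching angles.
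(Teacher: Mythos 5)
Your overall reduction (projectivize to $S^1$, relax to $\conv\{A,B\}$, study averaged periodic behaviour, certify optimality by a Hamilton--Jacobi subsolution) is a legitimate alternative framework, and your Case~2 and Case~3 sketches are at least pointed in the right direction. The decisive problem is your structural claim in Case~1. There the optimal value $\frac12\sqrt{(c^2-ab)/(c-\tfrac{a+b}{2})}$ is exactly the top eigenvalue of the \emph{constant relaxed} matrix $u_*A+(1-u_*)B$ with $u_*=\frac{c-b}{2c-a-b}\in(0,1)$, i.e.\ it is produced by a singular arc, not by a bang-bang cycle. No genuine two-arc periodic bang-bang orbit attains it: writing $\Phi(t,s)=\tr(e^{tA}e^{sB})$ one checks that
\begin{equation*}
\tr\Bigl(\bigl(e^{\frac t2 A}e^{\frac s2 B}\bigr)^2\Bigr)-\tr\bigl(e^{tA}e^{sB}\bigr)
=\frac{\sinh^2(\la t/2)\,\sinh^2(\mu s/2)}{2\la^2\mu^2}\,\tr\bigl([A,B]^2\bigr),
\end{equation*}
and since $\tr([A,B]^2)=2(c^2-ab)>0$ in Case~1, halving the two arc lengths \emph{strictly} increases the exponent of the cycle; hence the supremum over bang-bang cycles is approached only in the chattering limit of vanishing period and equals the singular value. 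So if you carried out your plan of "algebraic integration of the period and per-period growth" over a two-arc cycle, you would not recover the stated formula at any finite period. (Also, a cycle cannot pass through the unstable ray of $A$, which is a rest point of the angular dynamics and is reached only in infinite time.)

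The step that leads you astray is the exclusion of singular arcs: the fact that $\Phi=H_A-H_B$ is a nontrivial trigonometric polynomial on the $(\theta,\psi)$-phase space with isolated zeros \emph{as a function of one variable along generic curves} does not prevent a Pontryagin extremal from lying entirely inside the zero locus $\{\Phi=0\}$, which is a curve in the cylinder; the singular extremal is precisely such a trajectory (in the group picture it is the constant adjoint point $\e_*$ with $\vp\equiv 0$, and the corresponding control $u_*$ is admissible exactly when $c\ge a$ or $c\le b$). Once singular arcs are restored, Case~1 should be handled by computing the singular control and its exponent, and then showing (e.g.\ by the halving inequality above, or by your proposed verification function $V$, which would have to be saturated at the equilibrium of the relaxed field $u_*A+(1-u_*)B$ rather than along a switched cycle) that nothing beats it. Your Case~3 outline is compatible with the true answer (the factor $3$ in $\sqrt{-2/a}+3\sqrt{-2/b}$ comes from the optimal quarter-period/three-quarter-period allocation when $c<0$), but the transcendental switching-time analysis that selects it, and the subsolution certifying global optimality, are exactly the parts you have not supplied.
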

    We point out that for every $A,B\in\sld$ such that $a, b\leq 0$, then $ c^2\geq a b$ (see Section 6 for more explanations).
    \\
    In Figure \ref{GraphCost} you can see a graphical representation of Main Theorem.
    \\
    The whole paper is devoted to prove our Main Theorem and it is structured as follows:
    in Section 2 we show how computing the Lyapunov exponents $\ell (A,B)$ can be reduced to an optimal control problem and we will apply Pontryagin Maximum Principle (PMP) to this problem. We stated the precise version of PMP that we used in the Appendix.
    In Section 3 we will deduce from PMP some general properties of Pontryagin extremals. In particular, we will divide them into two categories: \emph{bang-bang} extremals and singular extremals. 
    In Section 4 and 5 we study \emph{bang-bang} and singular extremals respectively. 
    In Section 6 we show which extremal is the optimal one, depending on $\tr(A^2),\tr(B^2)$ and $\tr(AB)$. 
    \begin{figure}
        \centering
        \includegraphics[scale=0.6]{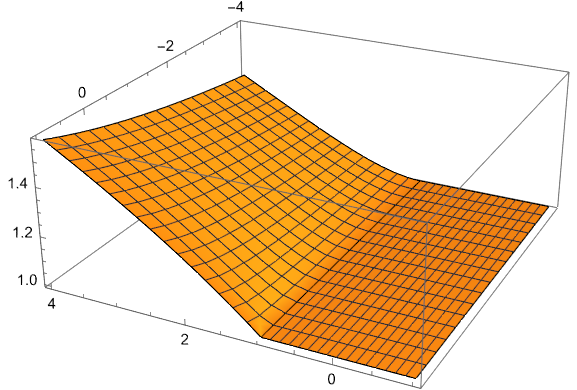}
        \includegraphics[scale=0.6]{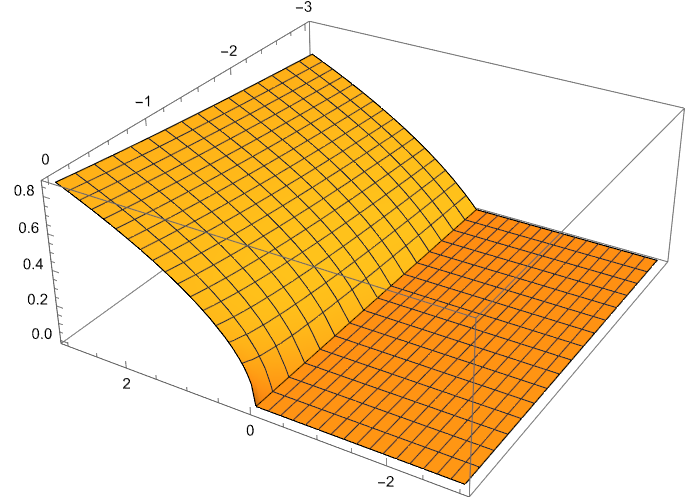}
        \includegraphics[scale=0.6]{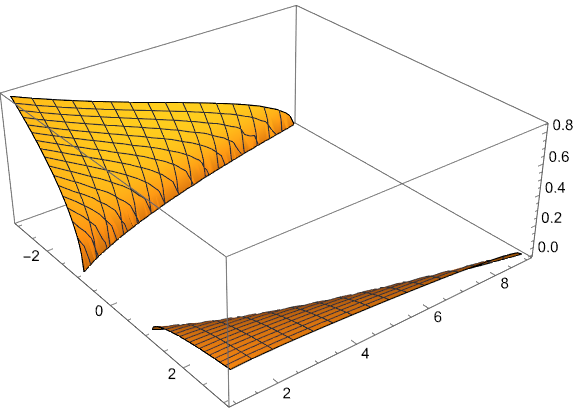}
        \caption{Graphics of $\ell(A,B)$ for different ranges of $\tr(A^2),\tr(B^2),\tr(AB)$. In the first picture $\tr(A^2)$ is normalized to $1$, while $\tr(B^2)\leq 1$ and $\tr(AB)\in\R$; in the second one $\tr(A^2)=0$, $\tr(B^2)\leq 0$ and $\tr(AB)\in\R$;
        in the third one $\tr(A^2)$ is normalized to $-1$, while $\tr(B^2)\geq 1$ and $|\tr(AB)|\geq -\sqrt{\tr(B^2)}$.}
        \label{GraphCost}
    \end{figure}
\end{section}

\begin{section}{Reduction to OC Problem and application of PMP}
    As explained in the Introduction, we will study the switched system induced by two matrices $A,B$:
     \begin{equation}
        \label{eq2}
        \left\{
        \begin{array}{l}
             \dot{X}(t)
             =
             X(t) 
             \big( 
             u(t) A 
             + 
             (1-u(t)) B 
             \big),
             \\
             X(0)=\I,              
        \end{array}
        \right.
    \end{equation}
    where 
    $
    X : [0,T] \to \SLD
    $
    , $T>0$,
    $A,B\in\sld$, such that $A,B,[A,B]$ are linearly independent, 
    $u\in\mathcal{U}:=\{v:[0,T]\to\{0,1\} \; | \; v \text{ measurable} \}$. Without loss of generality, we can suppose $\tr(A^2)\geq\tr(B^2)$. 
    \\
    Notice that here we write $\dot{X}=XA(t)$, while in the Introduction we had $\dot{X}=A(t)X$. Indeed, using the notation of the Introduction, system \eqref{eq2} should be written as $\dot{X}^T=X^T A(t)^T$, where the $T$ superscript denotes the transpose matrix. To simplify the notation we will drop this superscript.
    \\
    We prefer to solve the system with $A$ and $X$ in the reversed order because system \eqref{eq2} is left-invariant.
    \\
    We want to find the principal Lyapunov exponent $\ell(A,B)$. 
    Fixed $u\in \U$, we will denote with $X(\cdot;u)$ the solution of \eqref{eq2} with control equal to $u$. 
    \\
    A natural relaxation of our problem \eqref{eq2} consists in taking $u : [0,T]\to [0,1]$ measurable instead of $u : [0,T]\to \{0,1\}$. 
    This way, we can restate the original problem into a control problem and so we can use all the classical results from Control Theory. 
    \\
    Given $X\in\SLD$ with real eigenvalues, let us denote with $\la(X)$ the eigenvalue of $X$ bigger than 1. If $X$ has complex eigenvalue, we denote with $\la(X)$ the real part of these eigenvalues.
    \\
    Another crucial observation is that if the norm in \eqref{eq1} is the norm induced by the Euclidean metric on $\R^2$, then its value coincides with the absolute value of $\la(X(T))$. 
    Hence,
    if for every $T>0$ we can compute $c(T)$ such that 
    \begin{equation*}
        \sup_{u\in\U}
        \la (X(T;u)) 
        = 
        c(T), 
    \end{equation*}
    then, we obtain that
    \begin{equation*}
        \ell(A,B)
	   = 
	\limsup_{T\to+\infty}
        \frac{1}{T} 
        \ln c(T).
    \end{equation*}
    To be precise, the previous equality should be a ``lower or equal" a priori, because the optimal strategy to maximize $\la$ up to time $T$ can be change when $T$ goes to infinity. 
    We prove however that the limit is realized by a fixed control $u(t)$, which is explicitly computed. 
    For those familiar with the terminology of Control Theory, the optimal strategy will be one of the following three possibilities: 
    the constant singular control, corresponding to case 1. in the Main Theorem,
    the constant control equal to 1, corresponding to case 2.,  
    a bang-bang periodic control, corresponding to case 3. 
    We will define these concepts later.
    \\[3pt]
    Instead of finding 
    $
    \sup_u 
    \la (
        X(T;u)
    )
    $, one can find 
    $ 
    \sup_u 
    f
    \big(
        \la (
            X(T;u)
        )
    \big)
    $, 
    for a suitable $f$ monotone in $\la$.
    It turns out that a convenient choice of $f$ is the trace $\tr(X(T;u))$: besides being monotone in $\la$, it is a linear function of $X$ and it is invariant after a change of basis. 
    \\
    Thus, we can restate our original problem into the following Optimal Control problem:
    \begin{prob}
        Fix $A,B\in\sld$ and $T\geq 0$. Let $\mathcal{U}=\{u:[0,T]\to[0,1] \; | \; u \text{ measurable} \}$ and for each $u\in\U$ denote with $X(\cdot;u)$ the solution of the following Cauchy problem on $\SLD$:
        \begin{equation}
		\label{Group_eq}
		\begin{cases}
				\dot{X}(t)=X(t)(u(t)A+(1-u(t))B), \\
				X(0)=\I\in\SLD,
		\end{cases} \quad t\in[0,T].   
	\end{equation}
	Define the functional 
        $\Phi_T:\mathcal{U}\to \R$, 
        $\Phi_T(u)=\tr X(T;u)$. 
        We want to find 
        $\tu\in\mathcal{U}$ 
        which maximizes $\Phi_T$:
	\begin{equation}
            \label{max_prob}
		\tr 
            X(T;\Tilde{u})
            =
            \max_{u\in\mathcal{U}} 
            \tr X(T;u).
	\end{equation}
    \end{prob}
    If we are able to solve this problem for each fixed $T$, then from the estimate of 
    $\tr X(T;u)$ we can deduce an estimate for $\|X(T;u)\|$ which is uniform with respect to $u\in\U$ and depends only on $T$. 
    Then, the desired value of the principal Lyapunov exponent follows.
    \\
    Notice that if $[A,B]$ is a linear combination of $A$ and $B$, then the Lie algebra generated by $A,B$ is two-dimensional, hence is solvable. In this case we already know from \cite{AgLi} the value of $\ell(A,B)$. 
%
%
%
%
    \\[5pt]
    Existence of maximum in \eqref{max_prob} is guaranteed by Filippov Theorem (see, for instance, Theorem 10.1 in \cite{AgSa}).
    \\
    So, we can apply Pontryagin Maximum Principle (PMP) to our problem. For a general reference, see Theorem 12.3 in \cite{AgSa}. You can find in Appendix A (see Theorem \ref{PMP}) the precise statement that we use here.
    \\
    We define the Pontryagin left-invariant Hamiltonian function:
    \begin{equation}
        H(\e,u)
        =
        \tr\big(
            \e(
                uA+(1-u)B
                )
            \big)
        \quad 
        \e\in\sld,
        u\in[0,1]
        . 
    \end{equation}
    Here we identify tangent and cotangent space of $\SLD$ through Killing form of $\sld$, which is non-degenerate since $\sld$ is simple (see Appendix A). 
    Recall also that Killing form $K$ on $\sld$ can be computed as
    \begin{equation*}
        K(M_1,M_2)
        =
        4
        \tr(M_1 M_2),
        \quad 
        M_1,M_2\in\sld.
    \end{equation*}
    From Pontryagin Maximum Principle and the theory of left-invariant Hamiltonian systems on Lie Groups (see Appendix A) 
    we obtain that if $\tu$ is a local maximum for $\Phi_T$, then there exists a Lipschitz function $\e : [0,T] \to \sld$ satisfying 
        \begin{equation}
		\label{adj_syst}
		\left\{
			\begin{array}{l}
	 		\dot{\e}(t)
		 		=
		 		\big[
                \e(t),
                \:
		 		\tu(t)A+\big(1-\tu(t)\big)B
		 		\big],
		 		\\
		 		\e(0)
		 		=
		 		\e_0.
			 	\end{array}
			 	\right.
			\end{equation} 
    for a.e. $t\in [0,T]$ (see Proposition \ref{LieGroup1} and Equation \ref{HamSystLieG2}). We will call this ODE adjoint system and its solution $\e$ adjoint trajectory. Moreover, the two following conditions must hold:
	\begin{align}
		\label{max}    
		H(\e(t),\tu(t))
		&= 
		\max_{u\in[0,1]} 
		H(\e(t),u),
		\quad 
		\text{for a.e. } t\in[0,T], 
		\\
		\label{transv}
		\e(T;\tu)
        &=
        X(T;\tu)-\frac{\tr(X(T;\tu))}{2}\I.
    \end{align}   
    The latter condition is known as \emph{transversal condition}, see Lemma \ref{lemma_transv}.
    \\
    We will call Pontryagin \emph{extremal} a couple $(X(\cdot,u),\e(\cdot;u))$ satisfying all necessary conditions prescribed by PMP. 
\end{section}

\begin{section}{General properties of extremals}
    In the following, if it is not important to specify the control we will denote with $X$ the solution of \eqref{Group_eq} instead of $X(\cdot;u)$. 
\\
    The solution $\e$ to equation \eqref{adj_syst} can be expressed as
    \begin{equation}
        \label{sol_eta}
        \e(t)
        =
        X(t)^{-1}
        \e_0
        X(t).
    \end{equation}
    Indeed, differentiating the previous expression and recalling that 
    \begin{equation*}
        \frac{d}{dt}[X(t)^{-1}]
        =
        -
        X(t)^{-1}
        \dot{X}(t)
        X(t)^{-1},
    \end{equation*}
    one obtains equation \eqref{adj_syst}.
    \\[5pt]
    Thus, from \eqref{sol_eta}, we can see that the determinant along any solution of \eqref{adj_syst} is constant.
    For $M\in\sld$, we have the formula $2\det(M)=-\tr(M^2)$. In what follows, it will be more convenient to use the expression $\tr(M^2)$ instead of $\det M$.
    \\
    Notice that from \eqref{transv}, it follows that
		$$
		\tr( \e(T)^2 )
            =
            \frac{1}{2}
            \big(
                \tr X(T)
            \big)^2
            -
            2
            .
		$$
    So, since 
    $
    X(T)
    \in
    \SLD
    $, 
    then 
    $
    \tr(X(T))^2 
    \geq 
    4,
    $
    at least for large $T$,
    and so 
    $
    \tr( \e(T)^2 )
    > 
    0
    $.
    Moreover, since $\tr(\e^2)$ is constant, we can deduce that $\tr(\e(t)^2)>0$ for all $t\in[0,T]$. 
    In particular, $\e(t)$ must have real eigenvalues for all $t\in[0,T]$.
    \\[5pt]
    \begin{subsection}{Invariant Hyperboloid}
    Equation \eqref{adj_syst} is invariant after a dilation, in the sense that for any $c\in\R$, if $\e(\cdot)$ is a solution with initial value $\e(0)=\e_0$, then $\tilde{\e}(t)=c\e(t)$ is a solution with initial value $\Tilde{\e}(0)=c\e_0$.
    \\
    So, it is not restrictive to suppose that $\tr(\e(t)^2) = \tr (\e_0 ^2) = 1$. Hence, the trajectory $\e$ is contained in the set  
    \begin{equation*}
        H
        :=
        \{
        M\in\sld 
        \:
        |
        \:
        \tr (M^2) = 1
        \},
    \end{equation*}
    which is a (connected) hyperboloid in the three-dimensional vector space $\sld$.
    To see this, just take 
    \begin{equation*}
        M
        =
        \begin{pmatrix}
            m_1 & m_2 \\
            m_3 & -m_1
        \end{pmatrix}.
    \end{equation*}
    Then 
    \begin{equation*}
        \tr(M^2)
        =
        2
        (
        m_1 ^2 
        + 
        m_2 m_3
        )
        =
        2
        \left(
        m_1 ^2 
        + 
        \left(
            \frac{m_2+m_3}{2}
        \right)^2
        -
        \left(
            \frac{m_2-m_3}{2}
        \right)^2
        \right).
    \end{equation*}
    Since $A,B,[A,B]$ are linearly independent, we can write 
    \begin{equation*}
        M
        =
        \al A
        +
        \be B
        +
        \gamma [A,B],
    \end{equation*}
    and the equation defining $H$ reads
    \begin{equation*}
        \al^2 \tr(A^2)
        +
        \be^2 \tr(B^2)
        +
        \gamma^2 \tr([A,B]^2)
        +
        2\al\be \tr(AB)
        =
        1.
    \end{equation*}
    To simplify the previous expression, we will use the following result.
    \begin{lem}
        \label{TrBracket}
        Let $A,B\in \sld$. Then
        \begin{equation}
            \label{traccia_bracket}
            \tr([A,B]^2)
            =
            2
            \tr(AB)^2
            -
            2
            \tr(A^2)\tr(B^2).
        \end{equation}
    \end{lem}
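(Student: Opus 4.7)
The identity to prove is an algebraic relation between traces of products of traceless $2\times 2$ matrices, so the plan is to exploit the rigidity of $\sld$ under Cayley--Hamilton rather than compute in coordinates.

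The first step is to observe that for any $M\in\sld$, Cayley--Hamilton (together with $\tr M=0$) gives $M^2=-\det(M)\I=\frac{\tr(M^2)}{2}\I$. In particular, $A^2=\frac{a}{2}\I$ and $B^2=\frac{b}{2}\I$.

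The key algebraic reduction is then to derive the anticommutation identity $AB+BA=c\,\I$. To do this, I would apply the relation $M^2=\frac{\tr(M^2)}{2}\I$ to $M=A+B\in\sld$ and expand: $(A+B)^2=A^2+AB+BA+B^2$, while $\tr((A+B)^2)=a+2c+b$. Equating the two expressions and substituting $A^2=\frac{a}{2}\I$, $B^2=\frac{b}{2}\I$ collapses everything to $AB+BA=c\,\I$.

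With this in hand, I can write $[A,B]=AB-BA=2AB-c\,\I$, so that
\begin{equation*}
    [A,B]^2
    =
    4(AB)^2 - 4c\,AB + c^2\,\I.
\end{equation*}
Taking traces gives $\tr([A,B]^2)=4\tr((AB)^2)-4c^2+2c^2$. It remains to compute $\tr((AB)^2)$, which I would do by multiplying $AB+BA=c\,\I$ on the left by $AB$ to get $(AB)^2=c\,AB-A B^2 A=c\,AB-\frac{ab}{4}\I$, whence $\tr((AB)^2)=c^2-\frac{ab}{2}$. Plugging this in yields $\tr([A,B]^2)=2c^2-2ab$, which is the claim.

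There is no real obstacle here; the only thing to be careful about is keeping the scalar factors ($\frac12$'s from $M^2=\frac{\tr(M^2)}{2}\I$, and the factor $2$ from taking the trace of $\I$) straight throughout. A direct coordinate computation using the parametrization $M=\begin{pmatrix}m_1&m_2\\ m_3&-m_1\end{pmatrix}$ would work too, but the Cayley--Hamilton route above is cleaner and explains why the identity holds.
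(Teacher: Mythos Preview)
Your proof is correct. Both your argument and the paper's rest on the same engine---Cayley--Hamilton for $2\times 2$ matrices---but they organize the computation differently. The paper expands $\tr([A,B]^2)=2\tr((AB)^2)-2\tr(A^2B^2)$ directly from the definition of the bracket, then computes $\tr(A^2B^2)$ using that $A^2,B^2$ are scalar and computes $\tr((AB)^2)$ by applying Cayley--Hamilton to the (non-traceless) matrix $AB$. You instead apply Cayley--Hamilton only to traceless matrices ($A$, $B$, $A+B$) to extract the anticommutation relation $AB+BA=c\,\I$, and then use that identity twice: once to rewrite $[A,B]=2AB-c\,\I$, and once to reduce $(AB)^2$. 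Your route has the small advantage of isolating the structural fact $AB+BA=c\,\I$, which is useful in its own right; the paper's route is perhaps more direct since it skips that intermediate identity. Either way the bookkeeping is short and the result is the same.
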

    \begin{proof}
        From the definition of bracket, we have
        \begin{equation*}
            \tr([A,B]^2)
            =
            2\tr((AB)^2)
            -
            2\tr(A^2B^2).
        \end{equation*}
        Since $A,B\in \sld$, $A^2$ and $B^2$ are scalar, that is $A^2=\la^2\I$ and $B^2=\mu^2\I$. So
        \begin{equation*}
            \tr(A^2B^2)
            =
            2\la^2\mu^2
            =
            \frac{1}{2}
            \tr(A^2)
            \tr(B^2).
        \end{equation*}
        On the other hand, the minimal polynomial of $AB$ is 
        \begin{equation*}
            (AB)^2
            -
            \tr(AB)AB
            +
            \det(AB)\I
            =
            0,
        \end{equation*}
        and computing the trace of the expression above leads to
        \begin{equation*}
            \tr((AB)^2)
            =
            \tr(AB)^2
            -
            2\det A \det B
            =
            \tr(AB)^2
            -
            \frac{1}{2}
            \tr(A^2)
            \tr(B^2).
        \end{equation*}
        and so
        \begin{equation*}
            \tr([A,B]^2)
            =
            2\tr((AB)^2)
            -
            2\tr(A^2B^2)
            =
            2
            \tr(AB)^2
            -
            2
            \tr(A^2)\tr(B^2).
        \end{equation*}
        as we wanted to prove.
    \end{proof}
    Using the previous Lemma, we can rewrite the equation for $H$ as
    \begin{equation*}
        \al^2 \tr(A^2)
        +
        \be^2 \tr(B^2)
        +
        2\gamma^2 \big(
        \tr(AB)^2 
        - 
        \tr(A^2)\tr(B^2)
        \big)
        +
        2\al\be \tr(AB)
        =
        1.
    \end{equation*}
    \end{subsection}
    \begin{subsection}{Switching function, \emph{bang-bang} and singular extremals}
        Equation \eqref{max} can be rewritten more explicitly as
        \begin{align}
            \tr(
            \e(t) B)
            +
            \tu(t)
            \tr(
            \e(t) (A-B)
            )
            &=
            \max _{u\in[0,1]}
            \big[
            \tr(
            \e(t) B)
            +
            u
            \tr(
            \e(t) (A-B)
            )
            \big] 
            \\
            &=
            \tr(
            \e(t) B)
            +
            \max _{u\in[0,1]}
            u
            \tr(
            \e(t) (A-B)
            )
            ,
        \end{align}
        for almost every $t\in[0,T]$.
        Hence, we are lead to define $\varphi:[0,+\infty)\to\R$,
        \begin{equation}
        \label{swFunction}
            \varphi(t)
            =
            \tr(
            \e(t) (A-B)
            ).
        \end{equation}
        We will call $\varphi$ \emph{switching function}. Notice that $\varphi$ is at least Lipschitz.
        If $\varphi$ has only isolated zeros, then a control $u$, in order to satisfy Pontryagin Maximum Principle, must be of the form
        \begin{equation}
            \label{contr_max}
            u(t)
            =
            \left\{
            \begin{array}{l}
                1 \quad \text{ if } \varphi(t) > 0, \\
                0 \quad \text{ if } \varphi(t) < 0,
            \end{array}
            \right.
        \end{equation}
        that is, the isolated zeros of $\varphi$ are exactly the times when there are a \emph{switch} in the dynamics. 
        \\
        If $\varphi(t_0)=0$, we will call $t_0$ a \emph{switching time} and the corresponding $\e(t_0)$ \emph{switching point}.
        \\
        If $\varphi$ has any nonisolated zero, then Pontryagin Maximum Principle does not determine the control directly, but in our case it is still possible to bypass this obstacle and determine the control. 
        \begin{defn}[\emph{bang-bang} control, Singular control]
        \label{RegSingExtr}
            We will call \emph{bang-bang} a control $u$ and its corresponding extremal if $\varphi$ has only isolated zeros. 
            \\
            If instead  $\varphi\equiv 0$, then we will call the associated control $u$ and its extremal \emph{singular}.
        \end{defn}
        A control $u$ which takes only extremal values, i.e. $u(t)\in\{0,1\}$ for a.e. $t\in[0,T]$, is called \emph{bang-bang}.
        We will first focus on \emph{bang-bang} extremals and then we will discuss separately the singular case.
    \end{subsection}
    
\end{section}

\begin{section}{\emph{Bang-bang} extremals}
    \begin{subsection}{Properties of \emph{bang-bang} extremals}
        In order to determine the structure of Pontryagin \emph{bang-bang} extremals, it will be useful to study the plane of switching points:
        \begin{equation}
            \label{sw_curve}
            \Pi
            =
            \{
            M\in\sld
            \;
            |
            \; 
            \tr(MA)
            =
            \tr(MB)
            \}.
        \end{equation}
        This is the set of matrices in $\sld$ for which the function $\varphi$ is zero.
        \\
        Since the matrices $A,B,[A,B]$ form a bases of $\sld$ by hypothesis, we can write the equation $\tr(MA)=\tr(MB)$ as 
        \begin{equation*}
            \al \tr(A^2) + \be \tr (AB) + \gamma \tr([A,B]A)
            =
            \al \tr(AB) + \be \tr (B^2) + \gamma \tr([A,B]B),
        \end{equation*}
        that is 
        \begin{equation}
        \label{swCoord}
            \be
            =
            \frac{
            \tr(AB)-\tr(A^2)
            }{
            \tr(AB)-\tr(B^2)
            }\al. 
        \end{equation}
        So, if we impose $\tr(M^2)=1$, that is if we intersect the hyperboloid $H$ with the plane $\Pi$ we obtain the equation of a conic
        \begin{align}
        \label{EqSwCurve}
            \frac{
                \big(
                    2\tr(AB)-\tr(A^2)-\tr(B^2)
                \big)
                \tr
                \big(
                    [A,B]^2
                \big)
            }{
            \big(
                \tr(AB)-\tr(B^2)
            \big)^2
            }
            \al^2
            + 
            \tr
                \big(
                    [A,B]^2
                \big)
            \gamma^2
            =
            1.
        \end{align}
        \begin{figure}[ht]
            \centering
            \includegraphics[width=0.49\linewidth]{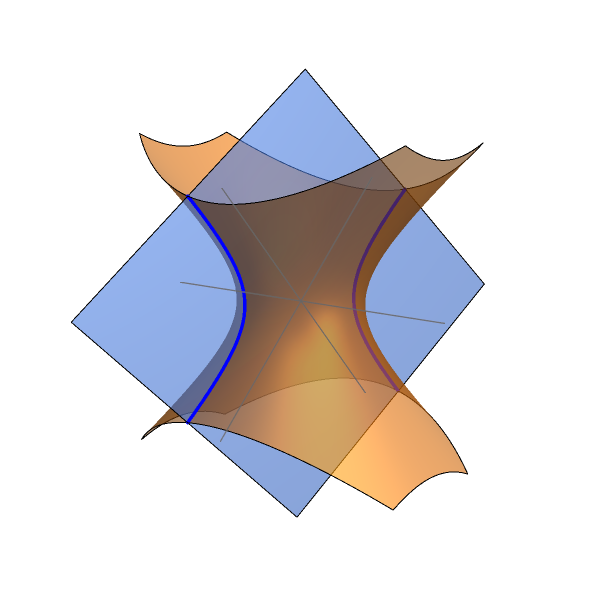}
            \includegraphics[width=0.49\linewidth]{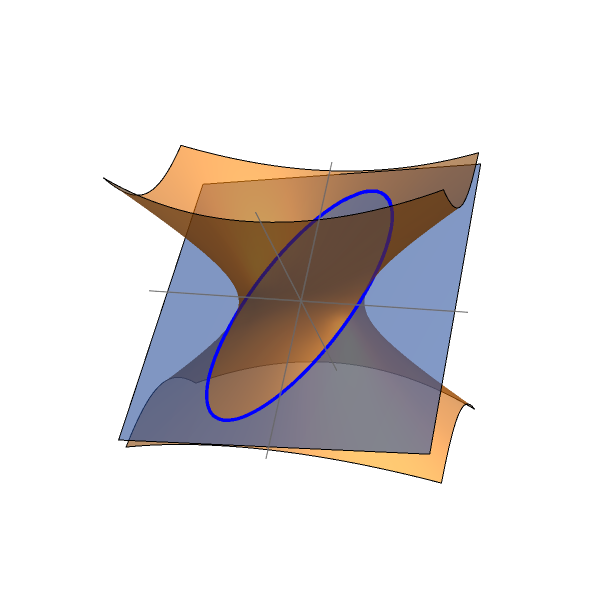}
            \caption{Examples of switching curves. In both pictures you can see the orange hyperboloid $H$, the blue plane $\Pi$ and their intersection, the switching curve $S$.}
            \label{figSwCurve}
        \end{figure}
        \begin{defn}[Switching curve]
            We will call \emph{switching curve} $S$ the intersection curve $\Pi\cap H$, whose equation is \eqref{EqSwCurve}. 
        \end{defn}
        So, the coefficient in front of $\al$ is positive if and only if 
        \begin{equation*}
            \big(
                2\tr(AB)-\tr(A^2)-\tr(B^2)
            \big)
            \tr
            \big(
            [A,B]^2
            \big)
            >
            0.
        \end{equation*}
        Hence, if 
        $
            \tr
            \big(
            [A,B]^2
            \big)
            >0
        $,
        then the switching curve is an ellipse if
        $
        \tr(AB)
        >
        \frac{
        \tr(A^2)+\tr(B^2)
        }{2}
        $, 
        and is an hyperbola if 
        $
            \tr
            \big(
            [A,B]^2
            \big)
            >0
        $
        and 
        $
        \tr(AB)
        <
        \frac{
        \tr(A^2)+\tr(B^2)
        }{2}
        $.
        \\
        If instead 
        $
            \tr
            \big(
            [A,B]^2
            \big)
            <
            0
        $
        ,
        the switching curve is an hyperbola if 
        $
        \tr(AB)
        <
        \frac{
        \tr(A^2)+\tr(B^2)
        }{2}
        $.
        Notice that 
        $
        \tr(AB)
        >
        \frac{
        \tr(A^2)+\tr(B^2)
        }{2}
        $
        implies 
        $
            \tr
            \big(
            [A,B]^2
            \big)
            >0
        $,
        so we can neglet the case
        $
        \tr(AB)
        >
        \frac{
        \tr(A^2)+\tr(B^2)
        }{2}
        $
        and
        $
            \tr
            \big(
            [A,B]^2
            \big)
            <0
        $
        .
    \end{subsection}
    \begin{subsection}{Hyperboloid foliation}
        Let us now fix an initial point on the hyperboloid:
        \begin{equation*}
            \e_0\in H.
        \end{equation*}
        Define the half spaces
        \begin{align*}
            \Pi^+
            &=
            \{
            M\in\sld
            \;
            |
            \;
            \tr(MA)>\tr(MB)
            \},
            \\
            \Pi^-
            &=
            \{
            M\in\sld
            \;
            |
            \;
            \tr(MA)<\tr(MB)
            \},
        \end{align*}
        and the half hyperboloids
        \begin{align*}
            H^+
            &=
            H
            \cap
            \Pi^+,
            \\
            H^-
            &=
            H
            \cap
            \Pi^-
            .        
        \end{align*}
        From condition \eqref{contr_max}, we can see that if $\e_0\in H^+$, the control $u$, in order to satisfy Pontryagin Maximum Principle, must be equal to $1$ until the trajectory $\e$ meets the switching curve $S$.
        Notice that, as far as $u(t)=1$, the Hamiltonian function of the problem is $H(\e(t),1)=\tr(\e(t)A)$, hence the quantity $\tr(\e(t)A)$ is constant. 
        \\
        Similarly, if $\e(t)\in H^-$ for $t\in[\tau_1,\tau_2]$, for some $0<\tau_1<\tau_2$, then $H(\e(t),0)=\tr(\e(t)B)$ is constant. 
        \\[5pt]
        Define the planes
        \begin{align*}
            \Pi_{A,c}
            &=
            \{
            M\in\sld 
            \;
            |
            \;
            \tr(MA)=c
            \}
            \\
            \Pi_{B,c}
            &=
            \{
            M\in\sld 
            \;
            |
            \;
            \tr(MB)=c
            \},
        \end{align*}
        for $c\in\R$.
        \\
        So, if $\e_0\in H^+$ is such that $\tr(\e_0 A)=c_0$, then the trajectory $\e$ is contained in the curve $H^+\cap \Pi_{A,c_0}$ (see Figure \ref{CasiIntersez}). 
        In particular, we can suppose that geometrically the trajectory $\e$ coincides with the whole curve $H^+\cap \Pi_{A,c_0}$. 
        Indeed, if $\e$ accumulates at some point, then the control $u(t)$ is identically equal to $1$ or to $0$ for every $t$ sufficiently large and, as we will see, the corresponding Lyapunov exponents are easy to compute.
        \\
        Now, we want to study all the possible configuration of $H,S$ and the planes $\Pi_{A,c_0},\Pi_{B,c_0}$. Consider the straight line given by the intersection of the three planes $\Pi,\Pi_{A,c_0},\Pi_{B,c_0}$:
        \begin{equation*}
            r_{c_0} : 
            \left\{
            \begin{array}{l}
                 \tr(MA)=c_0,\\
                 \tr(MB)=c_0,\\
                 \tr(MA)=\tr(MB).
            \end{array}
            \right.
        \end{equation*}
        There are three cases: 
        either this line does not intersect the hyperboloid $H$, 
        either it is tangent to the hyperboloid, 
        or it has two distinct intersection with the hyperboloid. 
        \\
        We will see later that if $r_{c_0}$ is tangent, then the corresponding trajectory $\e$ is constant and in particular it is a singular trajectory. So, we will consider first the non-tangent cases and we postpone the tangent case to the Section of singular extremals.
        \\[5pt]
        If $r_{c_0}$ does not intersect the hyperboloid, then the trajectory $\e$ starting from $\e_0$ does not meet the switching curve $S$. This means that $\e$ is confined either in the region $H^+$ or $H^-$, so the control $u$ is identically equal to 1 or $0$ for all times.
        \\[5pt]
        If instead there are two distinct intersection between $r_{c_0}$ and $H$, then there are three possibilities (see Figure \ref{CasiIntersez}):
        \begin{enumerate}
            \item there is exactly one switch;
            \item there are two switches and after the second switch the adjoint trajectory does not meet anymore the switching curve;
            \item the adjoint trajectory is periodic with infinite switches.
        \end{enumerate}
        Since we are interested in computing the Lyapunov exponents of the system, the case with no switches and the cases with finite number of switches are the same, because the resulting trajectories are asymptotically equal.  
        \\
        So, to resume, we have obtained the following result about the structure of \emph{bang-bang} extremals.
        \begin{thm}
            Any admissible control corresponding to a \emph{bang-bang} extremal must be in one of the following two forms:
            \begin{itemize}
                \item constant controls, that is either $u(\tau)\equiv 1$ or $u(\tau)\equiv 0$ for all $\tau \in [0,+\infty);$
                \item \emph{bang-bang} periodic controls, that is controls in the form 
                \begin{equation}
                    \label{BangBangPer}
                    u(\tau)
                    =
                        \begin{cases}
                            1 \quad \text{ if } \tau \in [0,t), \\
                            0 \quad \text{ if } \tau \in [t,T),
                        \end{cases}
                \end{equation}
                for some $T>t>0$ and then extended by periodicity on the whole interval $[0,+\infty)$.
            \end{itemize}
        \end{thm}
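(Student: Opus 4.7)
The plan is to classify admissible bang-bang extremals geometrically on the invariant hyperboloid $H$ and read off the structure of the control from the intersection pattern of the line $r_{c_0}$ with $H$.

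First, I would establish a conservation law across switches. On any maximal interval with $\tu\equiv 1$ the Hamiltonian $\tr(\e A)$ is constant, and symmetrically $\tr(\e B)$ is constant when $\tu\equiv 0$. At every switching time $t_0\in S$ the identity $\tr(\e(t_0)A)=\tr(\e(t_0)B)$ forces the two conserved constants to agree; call their common value $c_0$. Hence the whole adjoint trajectory lies in $H\cap(\Pi_{A,c_0}\cup\Pi_{B,c_0})$, and every switching point of $\tu$ lies on $r_{c_0}\cap H$.

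I would then invoke the trichotomy from the discussion just before the theorem: $|r_{c_0}\cap H|\in\{0,1,2\}$. The tangent case $|r_{c_0}\cap H|=1$ yields a common equilibrium of the $A$- and $B$-flows on $H$ and therefore $\varphi\equiv 0$; this is singular, hence excluded by Definition \ref{RegSingExtr}. If $r_{c_0}\cap H=\emptyset$, the arc carrying $\e_0$ never reaches $S$, so $\varphi$ has no zeros and $\tu\equiv 1$ or $\tu\equiv 0$ on $[0,+\infty)$, producing the first form. If $r_{c_0}\cap H=\{p_1,p_2\}$, switches can occur only at $p_1$ and $p_2$; between switches $\e$ traverses either the arc $\alpha\subset H^+\cap\Pi_{A,c_0}$ from $p_1$ to $p_2$ (under $\tu=1$) or the arc $\beta\subset H^-\cap\Pi_{B,c_0}$ from $p_2$ back to $p_1$ (under $\tu=0$). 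Since $p_1,p_2$ are regular points of both flows, the transit times $t_A$ across $\alpha$ and $t_B$ across $\beta$ are fixed positive constants, so from the second switch onwards the control is strictly periodic with pattern $(t_A$ on, $t_B$ off$)$.

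To obtain the control literally in the form \eqref{BangBangPer} rather than merely eventually of that form, I would absorb the initial transient. The length of the first segment, up to the first switch, depends on where $\e_0$ lies on its arc; as observed in the paragraph preceding the theorem, this finite transient has no effect on $\ell(A,B)$, so we normalise by replacing $\e_0$ with the switching point $p_1$. This makes the first segment have length exactly $t_A$ and yields \eqref{BangBangPer} with $t=t_A$ and $T=t_A+t_B$. The main obstacle is verifying that in the two-intersection case the transit times $t_A,t_B$ are finite and positive, i.e.\ that the trajectory genuinely oscillates rather than stalling or accumulating at a switching point. This follows because the restricted $A$- and $B$-flows on the one-dimensional arcs are smooth autonomous ODEs admitting no equilibrium on the closed arc (an equilibrium would force tangency of $r_{c_0}$ to $H$, already ruled out); the remaining bookkeeping separates the subcases where $H\cap\Pi_{A,c_0}$ is an ellipse, hyperbola, or parabola, with finiteness in the compact subcase being immediate and in the unbounded subcases following from explicit integration of the one-parameter adjoint flow on each branch.
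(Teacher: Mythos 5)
Your overall strategy coincides with the paper's: conservation of $\tr(\e A)$ on $u\equiv 1$ arcs and of $\tr(\e B)$ on $u\equiv 0$ arcs, matching of the two constants at a switch, and then the trichotomy for $r_{c_0}\cap H$ (empty, tangent, two points). The gap is in your treatment of the two-intersection case. You assert that after a switch at $p_1$ the trajectory necessarily traverses an arc to $p_2$ in a finite, fixed time, so that the extremal is strictly periodic from the second switch on; your only justification is that $p_1,p_2$ are regular points of the flows, plus a claim that in the unbounded subcases finiteness follows ``from explicit integration.'' This is not true in general: the conics $H\cap\Pi_{A,c_0}$ and $H\cap\Pi_{B,c_0}$ may be hyperbolas, and after a switch the adjoint trajectory can leave along an unbounded branch (or, for the special values $c_0=\pm\sqrt{\tr(A^2)}$, converge to an equilibrium $\pm A/\sqrt{\tr(A^2)}$ of the frozen flow lying on the conic) and never meet the switching curve again. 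The paper explicitly lists these possibilities in the two-intersection case — exactly one switch, or two switches after which the trajectory never returns to $S$ — and they are neither constant nor periodic. The dichotomy of the theorem is obtained there only by identifying extremals with finitely many switches with constant controls, on the grounds that they are asymptotically equal and give the same Lyapunov exponent; your proof needs this identification for the one- and two-switch extremals as well, not just for the initial transient of a genuinely periodic one, and as written it instead claims eventual periodicity that simply fails in those configurations.

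A secondary inaccuracy: in the tangent case the point $r_{c_0}\cap H=\{\e_*\}$ is \emph{not} a common equilibrium of the $A$- and $B$-flows (equilibria of $\dot\e=[\e,A]$ are multiples of $A$, and $\e_*$ is a nontrivial combination $\al_*A+\be_*B$); it is an equilibrium only of the flow of the singular convex combination $u_*A+(1-u_*)B$. The conclusion that this case belongs to the singular analysis agrees with the paper, but your stated reason for $\varphi\equiv 0$ there does not hold; what is true at $\e_*$ is $\varphi=\dot\varphi=0$ (since $\gamma=0$), and the exclusion of mixed bang--singular behaviour is argued separately in the paper's Section 5.
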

        Another important observation is the following.
        \begin{prop}
            \label{TransvImplPeriodic}
            A couple $(X,\e)$ solution of \eqref{Group_eq} and \eqref{adj_syst} respectively satisfies transversal condition \eqref{transv} if and only if $\e$ is periodic of period $T$. 
        \end{prop}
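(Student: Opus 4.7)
The central tool is the explicit formula~\eqref{sol_eta}, $\e(t)=X(t)^{-1}\e_0 X(t)$, which writes the adjoint trajectory as a conjugate of its initial value. The plan is to translate both the transversal condition and the periodicity condition into the algebraic statement ``$X(T)$ commutes with $\e_0$'' and match them up.

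For the direct implication, assume the transversal condition \eqref{transv} holds. Then
\[
X(T)=\e(T)+\frac{\tr X(T)}{2}\I,
\]
so $X(T)$ is a polynomial in $\e(T)$ and therefore commutes with $\e(T)$. Evaluating \eqref{sol_eta} at $t=T$ gives $X(T)\e(T)=\e_0 X(T)$; combining this with the commutativity just obtained yields $\e(T)X(T)=\e_0 X(T)$, hence $\e(T)=\e_0$, which is the desired $T$-periodicity.

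For the converse, assume $\e(T)=\e_0$. Then \eqref{sol_eta} at $t=T$ says $X(T)$ commutes with $\e_0$. Since $\tr(\e_0^2)>0$ (established just before), $\e_0\in\sld$ is nonscalar, so its centralizer in $M_2(\R)$ is the plane $\mathrm{span}(\I,\e_0)$; hence $X(T)=\al\I+\be\e_0$ for some $\al,\be\in\R$. Using $\tr\e_0=0$ one gets $\tr X(T)=2\al$, so the traceless part is $X(T)-\tfrac{\tr X(T)}{2}\I=\be\e_0$. Exploiting the linearity of \eqref{adj_syst}, the rescaling $\e_0\mapsto\e_0/\be$ produces a new solution of \eqref{adj_syst} with the same $X$, still $T$-periodic, and satisfying $\be=1$; for it the transversal condition \eqref{transv} becomes an identity.

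The delicate step is the rescaling in the converse: one must exclude $\be=0$. If $\be=0$ then $X(T)=\al\I$, so $X(T)=\pm\I$ (since $\det X(T)=1$); but then the transversal condition would force $\e(T)=\mp\I\cdot 0=0$, contradicting $\tr(\e_0^2)>0$. So the degenerate case cannot occur and the equivalence is proved.
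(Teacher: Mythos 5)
Your proposal takes the same route as the paper: the conjugation formula \eqref{sol_eta} plus a commutation argument. The forward implication is exactly the paper's proof, and in the converse your centralizer description (a nonscalar $\e_0$ has centralizer $\mathrm{span}(\I,\e_0)$, so $X(T)=\al\I+\be\e_0$ and its traceless part is $\be\e_0$) is in fact a cleaner justification than the paper's appeal to semisimplicity. Two points in your last steps need attention, though. First, the rescaling goes the wrong way: since $X(T)-\tfrac{\tr X(T)}{2}\I=\be\e_0$, you must replace $\e$ by $\be\e$ (not $\e/\be$); with $\e_0/\be$ the coefficient becomes $\be^2$, not $1$. This is a one-line fix, and note that (exactly as in the paper, whose proof only produces a constant $c$ with $c\e(T)=X(T)-\tfrac{\tr X(T)}{2}\I$) what one really proves is that \eqref{transv} holds after a dilation of $\e$, which is harmless given the scale invariance of \eqref{adj_syst} used in Section 3.1.

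Second, your exclusion of $\be=0$ is circular: you invoke the transversal condition---the very statement you are proving in this direction---to rule the case out, and then conclude it ``cannot occur''. In fact it can occur: if $X(T)=\pm\I$ (for instance $u\equiv 1$ with $A$ having imaginary eigenvalues and $T$ a full period of $e^{tA}$), then every $\e_0$ yields a $T$-periodic $\e$, while $X(T)-\tfrac{\tr X(T)}{2}\I=0$, so no nonzero multiple of $\e$ can satisfy \eqref{transv}. This is a genuine boundary case of the proposition itself---the paper's proof glosses over it as well, since its constant $c$ could be $0$ there---and the correct formulation of the converse is: if $\e$ is $T$-periodic and $X(T)\neq\pm\I$, then a suitable dilation of $\e$ satisfies \eqref{transv}. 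So rather than arguing the degenerate case away, you should state it as an explicit exclusion (it is the only configuration in which periodicity fails to give transversality, and it plays no role in the classification of extremals, where $\tr(\e(T)^2)>0$ forces $\tr(X(T))^2>4$).
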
 
        \begin{proof}
            If \eqref{transv} is satisfied, then 
            \begin{equation*}
                X(T)^{-1}
                \e(0)
                X(T)
                =
                \e(T)
                =
                X(T)
                -
                \frac{\tr(X(T))}{2}\I,
            \end{equation*}
            and simply multiplying both sides by $X(T)$ on the left and by $X(T)^{-1}$ on the left one obtains that also 
            \begin{equation*}
                \e(0)
                =
                X(T)
                -
                \frac{\tr(X(T))}{2}\I
                =
                \e(T),
            \end{equation*}
            that is, $\e$ is periodic of period $T$.
            \\
            Viceversa,
            if $\e(0)=\e(T)$, then, from $\e(T)=X(T)^{-1}\e(0)X(T)$, it follows that 
            \begin{equation*}
                X(T)\e(0)
                =
                \e(0)X(T),
            \end{equation*}
            that is, $\e(0),\e(T)$ and $X(T)$ commutes. But then also $X(T)-\frac{\tr(X(T))}{2}\I$ and $\e(T)$ commutes, and since they are both matrices in $\sld$, which is semisimple, they must be proportional, i.e. there is $c\in\R$ such that
            :
            \begin{equation*}
                c\e(T)
                =
                X(T)
                -
                \frac{\tr(X(T))}{2}\I.
            \end{equation*}
            Hence transversal condition \eqref{transv} is satisfied.
        \end{proof}
        \begin{figure}[!ht]
            \centering
            \includegraphics[width=0.48\linewidth]{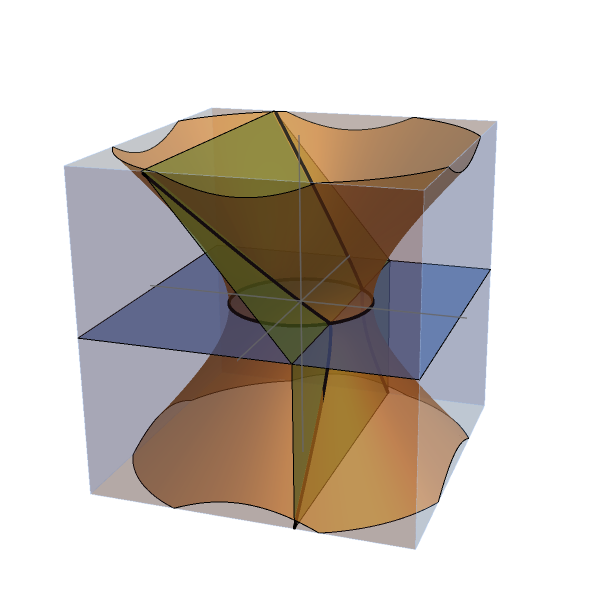}
            \includegraphics[width=0.48\linewidth]{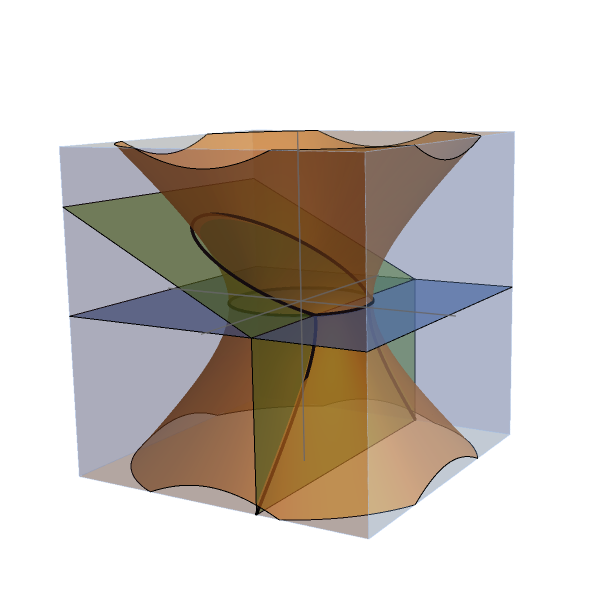}
            \includegraphics[width=0.48\linewidth]{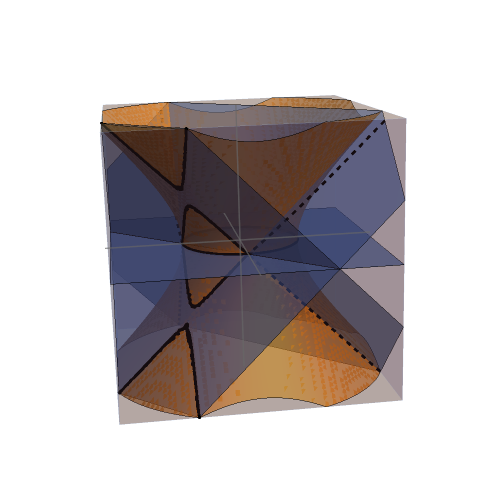}
            \includegraphics[width=0.48\linewidth]{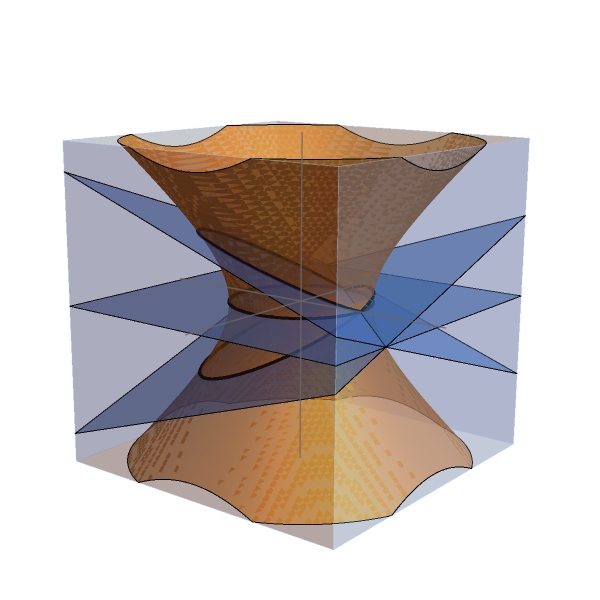}
            \caption{The figure illustrates the possible cases when the switching curve is a horizontal circle: 
            in the first figure the intersection between the Hyperboloid $H$ and the planes $\Pi_{A,c_0}$ and $\Pi_{B,c_0}$ are two hyperbolas and the corresponding adjoint trajectory has only one switch. 
            In the second figure the intersection with one plane is an ellipse and the other is a hyperbola, and the resulting adjoint trajectory has two switch. 
            In the third case, the two intersections between the hyperboloid and the planes are hyperbolas, but in this case there are three connected components and one of them is closed, which corresponds to a periodic adjoint trajectory. 
            The last figure is similar to the third one but with ellipses instead of hyperbolas.}
            \label{CasiIntersez}
        \end{figure}
        \noindent
        Now, we want to find a precise expression for the intersection points between the line $r_{c_0}$ and the switching curve $S$.
        \\
        It is convenient to parameterize such initial data with the corresponding value $c_0$. More precisely, we are looking for initial data $\e_0\in\sld$ such that
        \begin{equation}
            \label{2inters}
            \left\{
            \begin{array}l
                 \tr(\e_0^2)=1,  \\[3pt]
                 \tr(\e_0A)=\tr(\e_0B), \\[3pt]
                 \tr(\e_0A)=c_0.
            \end{array}
            \right.
        \end{equation}
         Using the same notation introduced above, we can write 
         $
         \e_0
         =
         \al A
         +
         \be B
         +
         \gamma [A,B],
         $
         and one can find the corresponding solutions for $\al,\be,\gamma$:
         \begin{align}
         \nonumber
             \al
             &=
             \frac{
                 2
                 \big(
                    \tr(B^2)-\tr(AB)
                 \big)
             }{
                \tr\big([A,B]^2\big)
             }
             c_0,
             \\[3pt]
            \label{CoeffEta}
             \be
             &=
             \frac{
                2
                \big(
                    \tr(A^2)-\tr(AB)
                \big)
             }{
                \tr\big([A,B]^2\big)
             }
             c_0,
             \\[3pt]
             \nonumber
             \gamma
             &=
             \pm 
             \frac{
                \sqrt{
                        2
                        \big(
                            \tr(A^2) + \tr(B^2) -2\tr(AB)
                        \big)
                        c_0 ^2
                        +
                        \tr\big([A,B]^2\big)
                }
             }{
                |\tr\big([A,B]^2\big)|
             }.
         \end{align}
         Thus, there are two distinct intersection if $c_0$ is such that 
            \begin{equation*}
                2
                \big(
                    \tr(A^2) + \tr(B^2) - 2\tr(AB)
                \big)
                c_0 ^2
                +
                \tr([A,B]^2)
                >
                0.
            \end{equation*}
        If  
        $
        \tr(A^2) + \tr(B^2) -2\tr(AB) 
        >
        0
        $
        and 
        $\tr([A,B]^2)>0$ (the switching curve is an hyperbola), then the previous condition holds for every $c_0\in\R$.
        If instead 
        $
        \tr(A^2) + \tr(B^2) -2\tr(AB) 
        >
        0
        $
        and 
        $\tr([A,B]^2)<0$ (the switching curve is again an hyperbola)
        this condition corresponds to
        \begin{equation*}
            c_0
            >
            \frac{1}{\sqrt{2}}
            \sqrt{
                \frac{
                    \tr\big([A,B]^2\big)
                }{
                    2\tr(AB) - \tr(A^2) - \tr(B^2)
                }
            }
            \quad 
            \text{ or }
            \quad 
            c_0
            <
            -
            \frac{1}{\sqrt{2}}
            \sqrt{
                \frac{
                    \tr\big([A,B]^2\big)
                }{
                    2\tr(AB) - \tr(A^2) - \tr(B^2)
                }
                }.
        \end{equation*}
        If instead   
        $
        \tr(A^2) + \tr(B^2) -2\tr(AB) 
        <
        0
        $
        ,
        this implies $\tr\big([A,B]^2\big)>0$ (hence the switching curve is an ellipse)
        \footnote{It a consequence of \ref{TrBracket} and $2c>a+b \implies c^2>ab$, which follows from $ab<\frac{1}{4}(a+b)^2$.}
        ,
        hence $c_0$ can assume the values
        \begin{equation*}
            -\frac{1}{\sqrt{2}}
            \sqrt{
                \frac{
                    \tr\big([A,B]^2\big)
                }{
                    2\tr(AB) - \tr(A^2) - \tr(B^2)
                }
            }
            <
            c_0
            <
            \frac{1}{\sqrt{2}}
            \sqrt{
                \frac{
                    \tr\big([A,B]^2\big)
                }{
                    2\tr(AB) - \tr(A^2) - \tr(B^2)
                }
                }.
        \end{equation*}
        \\[5pt]
        Notice that if $\gamma=0$, then the line $r_{c_0}$ is tangent to the hyperboloid.
        \\[5pt]
        As already mentioned in the Introduction, if the control $u$ in equation \eqref{Group_eq} is constant equal to $1$, then the corresponding principal Lyapunov exponent is equal to the  positive eigenvalue of $A$, i.e.
        \begin{equation*}
            \ell_{u=1}(A,B)
            =
            \sqrt{
                \frac{\tr(A^2)}{2}
            }.
        \end{equation*}
        Analogously, if the control $u$ is constant equal to $0$, then
        \begin{equation*}
            \ell_{u=0}(A,B)
            =
            \sqrt{
                \frac{\tr(B^2)}{2}
            }.
        \end{equation*}
        Clearly, since we are assuming $\tr(A^2)\geq \tr(B^2)$, we have $\ell_{u=1} \geq \ell_{u=0}$.
        \\
        So, it remains to analyze the case of periodic controls.
    \end{subsection}

    \begin{subsection}{\emph{Bang-bang} periodic controls}
    So far, we proved that there are two qualitatively different type of adjoint trajectories: with a finite number of switches or periodic trajectories with an infinite number of switches. 
    \par 
    If the adjoint trajectory is periodic, then also the switching function \eqref{swFunction} is periodic and then the associated control is periodic too. 
    However, not every \emph{bang-bang} periodic control as in \eqref{BangBangPer} satisfy PMP. As we will see, for any $T>0$, the control in \eqref{BangBangPer} satisfies PMP only for a specific value of the switching time $t\in[0,T]$. 
    \\
    In this Section we are going to describe how to find \emph{all} periodic solution to equation adjoint equation \eqref{adj_syst}, and so we will be able to characterize all periodic controls satisfying Pontryagin Maximum Principle.
    \\[5pt]
    From \eqref{sol_eta}, finding all periodic solution to equation adjoint equation \eqref{adj_syst} amounts to find $X(T)$ in the form
        \begin{equation*}
            X(T)
            =
            e^{t A}
            e^{s B},
        \end{equation*}
    with $t+s=T$ and $0,t,T$ switching times. 
    \par
    We will proceed as follows: for every $t$ we will find $s=s(t)$ such that $0,t,T=t+s(t)$ are switching points.
    The following result simplifies this task.
    \begin{prop}
        Define the \textit{bang-bang} control
        \begin{equation*}
                u(\tau)
                =
                \begin{cases}
                    1 \quad \text{if} \quad \tau\in[0,t], \\
                    0 \quad \text{if} \quad \tau\in[t,T].
                \end{cases}
        \end{equation*}
        Let $X$ be the corresponding solution of \eqref{Group_eq} and $\e$ be the solution of \eqref{adj_syst}. 
        Suppose that $\e$ satisfies the condition of transversality \eqref{transv}. 
        If $t$ is a switching point, then also $0$ and $T$ are switching points.
        \end{prop}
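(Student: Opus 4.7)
The plan is to combine two ingredients: (i) the transversality condition forces $\e$ to be $T$-periodic by Proposition \ref{TransvImplPeriodic}, and (ii) on each bang arc the corresponding Hamiltonian is conserved, so that $\tr(\e(\tau)A)$ is constant on $[0,t]$ and $\tr(\e(\tau)B)$ is constant on $[t,T]$. Combining these two facts with the switching equation $\varphi(t)=0$ gives $\varphi(0)=\varphi(T)=0$ by a short chain of equalities.

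More precisely, I would first observe that for any fixed matrix $M\in\sld$, if $\e$ solves $\dot\e=[\e,M]$ on some subinterval, then $\frac{d}{d\tau}\tr(\e(\tau)M)=\tr([\e(\tau),M]M)=0$ by cyclicity of the trace. Applying this to $M=A$ on $[0,t]$ (where $u\equiv 1$) and to $M=B$ on $[t,T]$ (where $u\equiv 0$), I get
\begin{equation*}
\tr(\e(0)A)=\tr(\e(t)A),\qquad \tr(\e(t)B)=\tr(\e(T)B).
\end{equation*}
The assumption that $t$ is a switching point reads $\tr(\e(t)A)=\tr(\e(t)B)$, and Proposition \ref{TransvImplPeriodic} together with transversality yields $\e(0)=\e(T)$, hence $\tr(\e(0)A)=\tr(\e(T)A)$ and $\tr(\e(0)B)=\tr(\e(T)B)$.

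Chaining these equalities gives
\begin{equation*}
\tr(\e(0)A)=\tr(\e(t)A)=\tr(\e(t)B)=\tr(\e(T)B)=\tr(\e(0)B),
\end{equation*}
so $\varphi(0)=\tr(\e(0)(A-B))=0$, and the analogous chain (or simply periodicity of $\varphi$ inherited from that of $\e$) gives $\varphi(T)=0$. I expect no real obstacle: the only subtle step is invoking Proposition \ref{TransvImplPeriodic} to convert transversality into periodicity, after which the argument is just the conservation of the Hamiltonian on each bang arc, which is a three-line computation.
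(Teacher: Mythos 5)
Your proof is correct, and it takes a genuinely (if mildly) different route from the paper's. The paper substitutes the explicit representation $\e(\tau)=X(\tau)^{-1}\e_0X(\tau)$ together with $\e_0=X(T)-\tfrac{\tr(X(T))}{2}\I$ (obtained, as you do, from transversality via Proposition \ref{TransvImplPeriodic}), writes $\e$ piecewise in terms of $e^{\tau A}$ and $e^{\tau B}$, and then uses cyclicity of the trace to convert the switching condition at $t$ into the endpoint identity $\tr(X(T)A)=\tr(X(T)B)$, i.e.\ \eqref{SwEqFinTime}, from which the switching character of $T$ (and of $0$, by periodicity) follows. You instead avoid all explicit exponentials and argue with the conservation of the active Hamiltonian on each bang arc, namely that $\tr(\e(\tau)A)$ is constant where $u\equiv 1$ and $\tr(\e(\tau)B)$ is constant where $u\equiv 0$ (a fact the paper itself records in Subsection 4.2), and then close the chain
\begin{equation*}
\tr(\e(0)A)=\tr(\e(t)A)=\tr(\e(t)B)=\tr(\e(T)B)=\tr(\e(0)B)
\end{equation*}
using $\e(0)=\e(T)$. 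The two arguments rest on the same two pillars (periodicity from \eqref{transv} and trace cyclicity), but yours is shorter, does not need the specific form $X(T)=e^{tA}e^{sB}$, and in fact extends verbatim to bang-bang controls with any finite number of arcs; what the paper's computation buys in exchange is the explicit relation \eqref{SwEqFinTime}, which is exactly what the subsequent Remark reformulates as the switching-time equation \eqref{swTime} for the functional $\Phi$, so if you only prove the proposition your way you would still need a one-line extra step (e.g.\ $\tr(\e(T)A)=\tr(X(T)A)$ since $\tr A=0$) to recover that formula for later use.
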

        \begin{proof}
            The solution of equation \eqref{adj_syst} is 
            \begin{equation*}
                \e(\tau)
                =
                X(\tau)^{-1}
                \e_0
                X(\tau).
            \end{equation*}
            Since $\e$ must satisfy the transversal condition, we know by Proposition \ref{TransvImplPeriodic} that $\e$ is periodic of period $T$, so it must be
            \begin{equation*}
                \e_0
                =
                X(T)
                -
                \frac{
                    \tr
                    \big(
                        X(T)
                    \big)
                }{
                    2
                }
                \I.
            \end{equation*}
            So, putting together the last two equations with the formula for $X$, we obtain that 
            \begin{equation*}
                \e(\tau)
                =
                \left\{
                \begin{array}{l}
                     e^{(t -\tau)A}
                     e^{s B}
                     e^{\tau A}
                     -
                     \frac{
                        \tr
                        \left(
                            X(T)
                        \right)
                     }{
                        2
                     }
                     \I
                     \quad 
                     \text{ if }
                     \tau\in[0,t]
                     \\[4pt]
                     e^{(s -\tau+t)B}
                     e^{t A}
                     e^{\tau-t B}
                     -
                     \frac{
                        \tr
                        \left(
                            X(T)
                        \right)
                     }{
                        2
                     }
                     \I
                     \quad 
                     \text{ if }
                     \tau\in[t,t+s]
                \end{array}
                \right.
            \end{equation*}
            We are assuming that $t$ to be a switching time, so it satisfy 
            \begin{equation*}
                \tr(
                \e(t) A
                )
                =
                \tr(
                \e(t) B
                ).
            \end{equation*}
            Using the formula for $\e$ that we have just obtained, we get
            \begin{equation}
                \label{eeqq}
                \tr(
                e^{s B}
                e^{t A}
                A
                )
                =
                \tr(
                e^{s B}
                e^{t A}
                B
                ).
            \end{equation}
            The left hand side can be rewritten as 
            \begin{equation*}
                \tr(
                e^{s B}
                e^{t A}
                A
                )
                =
                \tr(
                e^{s B}
                A
                e^{t A}
                )
                =
                \tr(
                e^{t A}
                e^{s B}
                A
                )
                =
                \tr(
                X(T)
                A
                ).
            \end{equation*}
            Similarly, the right hand side of \eqref{eeqq} is equal to 
            $
            \tr(
            X(T)B
            )
            $. 
            Hence, it follows that also 
            \begin{equation}
            \label{SwEqFinTime}
                \tr(
                X(T)A
                )
                =
                \tr(
                X(T)B
                ),
            \end{equation}
            which implies that $T$ (and thus, since $\e$ is $T$-periodic, also $0$) is a switching time.
        \end{proof}
        \begin{rem}
            If we consider a solution of \eqref{Group_eq} with one switch, the final point is in the form 
            \begin{equation*}
                X(T)
                =
                X(t,s)
                =
                e^{t A}
                e^{s B}
            \end{equation*}
            where $T=t+s$. So, we can see the cost functional as a function of $t$ and $s$:
            \begin{equation*}
                \Phi(t,s)
                =
                \tr(
                e^{t A}
                e^{s B}
                ).
            \end{equation*}
            With this notation, if $t$ is a switching point, from the previous proof follows that $X(T)$ must satisfy Equation \eqref{SwEqFinTime}, which can be rewritten as 
            \begin{equation}
                \label{swTime}
                \frac{
                    \pa \Phi
                }{
                    \pa t
                }(t,s)
                =
                \frac{
                    \pa \Phi
                }{
                    \pa s
                }(t,s).
            \end{equation}
        \end{rem}
        To summarize, given any $t>0$, equation \eqref{swTime} give us a condition to find $s>0$ as a function of $t$ such that the control
        \begin{equation*}
                u(\tau)
                =
                \begin{cases}
                    1 \quad \text{if} \quad \tau\in[0,t], \\
                    0 \quad \text{if} \quad \tau\in[t,t+s],
                \end{cases}
        \end{equation*}
        satisfies Pontryagin Maximum Principle on the time interval $[0,t+s]$. The corresponding solution to the adjoint system $\e$ satisfies transversal condition on the same time interval, so one can extend this control by periodicity on the whole line $[0,+\infty)$.  
        \par
        Concerning the value of the principal Lyapunov exponent, when the adjoint trajectory is periodic it can be computed as follows:
        \begin{align*}
            \ell_{\text{per}}(T)
            &=
            \limsup_{n\to+\infty}
            \frac{1}{n T} 
            \log
            \|X(nT)\|
            =
            \\
            &=
            \limsup_{n\to+\infty}
            \frac{1}{n T} 
            \log
            \|X(T)^n\|
            =
            \\
            \label{perLyap}
            &=
            \limsup_{n\to+\infty}
            \frac{1}{n T} 
            \log
            \la(T)^n
            =
            \frac{1}{T} 
            \log
            \la(T),
            \numberthis
        \end{align*}
        Where $\lambda(T)$ is the greatest eigenvalue of $X(T)$.
        We can further simplify the expression for $\ell_{\text{per}}(T)$. Indeed, the functional of our Optimal Control Problem does not appear explicitly in the formula for $\ell_{\text{per}}(T)$. However, we can deduce the value the eigenvalue from the trace solving 
        \begin{equation*}
            \la(T)^2
            -
            \tr(X(T))
            \la(T) 
            +
            1
            =
            0,
        \end{equation*}
        which gives
        \begin{equation*}
            \la(T)
            =
            \frac{
                \tr(X(T))
                +
                \sqrt{
                    \tr(X(T))^2-4
                }
            }{2}.
        \end{equation*}
        Using the identity 
        $
        \arcosh x
        =
        \log(
            x
            +
            \sqrt{x^2-1}
        )
        $, we obtain
        \begin{equation*}
        \label{1SwLyap}
            \ell_{\text{per}}(T)
            =
            \frac{
                \arcosh
                \left(
                \frac{ 
                \tr(X(T))
                }{2}
                \right)
            }{T}.
        \end{equation*}
\end{subsection}
    
    \begin{subsection}{Principal Lyapunov exponent for periodic controls}
    Now, we are now going to compute the upper bound for Principal Lyapunov exponent of periodic controls. 
    As we saw previously, in order to find the extremals of our problem we have to study the functional 
$
\Phi:
[0,+\infty)
\times
[0,+\infty)
\to 
\R
$
\begin{equation*}
\label{Funct}
\Phi(t,s)
=
\tr(
e^{t A}
e^{s B}
).
\end{equation*}
Since equations \eqref{Group_eq} and \eqref{adj_syst} are invariant after a change of basis, if $A$ and $B$ are not nilpotent (see Remark \ref{nilp} for the nilpotent case), without loss of generality we can assume
\begin{equation*}
    A
    =
    \begin{pmatrix}
        \la & 0 \\
        0 &-\la 
    \end{pmatrix}
    ,
    \quad
    B
    =
    P
    \begin{pmatrix}
     \mu & 0 \\
     0 & -\mu
    \end{pmatrix}
    P^{-1},
    \quad 
    P
    =
    \begin{pmatrix}
        a & b \\
        c & d
    \end{pmatrix},
\end{equation*}
with 
$
\la,\mu
\in 
\R 
\cup 
i\R
$
and $P\in\mathrm{SL}_2(\C)$. 
Up to exchange the role of $A$ and $B$, we can assume $\tr(A^2)\geq\tr(B^2)$.
\begin{prop}
    With this notations, the functional in \eqref{Funct} is
    \begin{equation}
        \label{TrEndpoint}
        \Phi(t,s)
        =
        2
        \cosh(\la t)
        \cosh(\mu s)
        +
        \frac{\tr(AB)}{\lambda \mu}        
        \sinh(\la t)
        \sinh(\mu s).
    \end{equation}
\end{prop}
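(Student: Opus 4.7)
The plan is a direct matrix computation using the assumed diagonal forms, followed by identification of the coefficient $ad+bc$ with the invariant $\tr(AB)/(\lambda\mu)$.

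First I would exploit the explicit diagonalizations. Since $A=\mathrm{diag}(\la,-\la)$, we have $e^{tA}=\mathrm{diag}(e^{\la t},e^{-\la t})$. Since $B=P\,\mathrm{diag}(\mu,-\mu)\,P^{-1}$ with $P\in\mathrm{SL}_2(\C)$, we have $e^{sB}=P\,\mathrm{diag}(e^{\mu s},e^{-\mu s})\,P^{-1}$. Using $ad-bc=1$ so that $P^{-1}=\begin{pmatrix}d&-b\\-c&a\end{pmatrix}$, I would multiply out and rewrite each diagonal entry via $e^{\pm\mu s}=\cosh(\mu s)\pm\sinh(\mu s)$ together with $ad-bc=1$, obtaining
\begin{equation*}
e^{sB}=\begin{pmatrix}\cosh(\mu s)+(ad+bc)\sinh(\mu s) & -2ab\sinh(\mu s)\\ 2cd\sinh(\mu s) & \cosh(\mu s)-(ad+bc)\sinh(\mu s)\end{pmatrix}.
\end{equation*}

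Next, since $e^{tA}$ is diagonal, only the diagonal entries of $e^{sB}$ contribute to $\tr(e^{tA}e^{sB})$. Multiplying the $(1,1)$-entry of $e^{sB}$ by $e^{\la t}$ and the $(2,2)$-entry by $e^{-\la t}$ and adding gives
\begin{equation*}
\tr(e^{tA}e^{sB})=2\cosh(\la t)\cosh(\mu s)+2(ad+bc)\sinh(\la t)\sinh(\mu s).
\end{equation*}

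It remains to identify $2(ad+bc)$ with $\tr(AB)/(\la\mu)$. For this I would compute $B$ explicitly as $P\,\mathrm{diag}(\mu,-\mu)\,P^{-1}$, obtaining $B=\mu\begin{pmatrix}ad+bc&-2ab\\2cd&-(ad+bc)\end{pmatrix}$, and then multiply by $A=\mathrm{diag}(\la,-\la)$ on the left, whence the diagonal entries of $AB$ are both $\la\mu(ad+bc)$ and so $\tr(AB)=2\la\mu(ad+bc)$. Substituting $ad+bc=\tr(AB)/(2\la\mu)$ into the previous display yields the claimed formula.

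This is a purely computational lemma with no real obstacle: the only mild subtlety is using $ad-bc=1$ at the right moment to collapse $ad\,e^{\mu s}-bc\,e^{-\mu s}$ into $\cosh(\mu s)+(ad+bc)\sinh(\mu s)$, and recognizing that the trace $\tr(AB)$, being basis-invariant, provides the coordinate-free expression of $ad+bc$. The formula is well-defined when $\la=0$ or $\mu=0$ by taking the limit (the ratio $\sinh(\la t)/\la\to t$), which covers the non-nilpotent degenerate case; the genuinely nilpotent case is excluded by hypothesis.
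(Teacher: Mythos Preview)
Your computation is correct, but the paper takes a shorter, coordinate-free route. Immediately after the proposition the paper states the lemma
\[
e^{tM}=\cosh(\alpha t)\,\I+\frac{\sinh(\alpha t)}{\alpha}\,M,\qquad M\in\sld,\ M^2=\alpha^2\I,
\]
and then simply multiplies $e^{tA}e^{sB}$ as a polynomial in $\I,A,B,AB$ and takes the trace: since $\tr A=\tr B=0$ and $\tr\I=2$, only the $\I$- and $AB$-terms survive, giving \eqref{TrEndpoint} in one line. Your approach instead fixes coordinates via $P$, computes $e^{sB}$ entrywise, reads off the trace, and then has to reverse-engineer the invariant $\tr(AB)$ from the entry combination $ad+bc$. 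Both arguments are valid; the paper's buys brevity and never leaves the invariant language (no $a,b,c,d$ ever appear), while yours is more concrete and would also work if one needed the full matrix $e^{tA}e^{sB}$ rather than just its trace.
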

This formula is a simple consequence of the following Lemma.
\begin{lem}
    If $M\in\sld$, with positive eigenvalue $\al > 0$, then 
    \begin{equation}
        \label{ExpMatr}
        e^{tM}
        =
        \cosh(\al t)
        \I
        +
        \frac{\sinh(\al t)}{\al}
        M.
    \end{equation}
\end{lem}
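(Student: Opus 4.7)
The plan is to use Cayley--Hamilton to reduce powers of $M$ to a linear combination of $I$ and $M$, and then split the exponential series into even and odd parts, which will produce the hyperbolic cosine and sine series.

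First I would observe that, since $M\in\sld$, its characteristic polynomial is $\lambda^2 - \tr(M)\lambda + \det(M) = \lambda^2 + \det(M)$. Because the eigenvalues of $M$ are $\pm\alpha$, we have $\det(M) = -\alpha^2$. Cayley--Hamilton then yields the key identity
\begin{equation*}
M^2 = \alpha^2 \I.
\end{equation*}
By induction, $M^{2k} = \alpha^{2k}\I$ and $M^{2k+1} = \alpha^{2k}M$ for every $k\ge 0$.

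Next I would plug this into the power series definition of the matrix exponential and separate the even-indexed and odd-indexed terms:
\begin{equation*}
e^{tM}
=
\sum_{k=0}^{\infty}\frac{t^{2k}M^{2k}}{(2k)!}
+
\sum_{k=0}^{\infty}\frac{t^{2k+1}M^{2k+1}}{(2k+1)!}
=
\left(\sum_{k=0}^{\infty}\frac{(\alpha t)^{2k}}{(2k)!}\right)\I
+
\frac{1}{\alpha}\left(\sum_{k=0}^{\infty}\frac{(\alpha t)^{2k+1}}{(2k+1)!}\right)M.
\end{equation*}
Recognizing the two parenthesized series as $\cosh(\alpha t)$ and $\sinh(\alpha t)$ respectively gives the claimed formula.

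There is really no obstacle here; the argument is entirely algebraic once one notices $M^2=\alpha^2 \I$, and the hypothesis $\alpha>0$ is used only to make the formula unambiguous (the same identity holds with $\alpha<0$ since $\cosh$ is even and $\sinh(\alpha t)/\alpha$ is invariant under $\alpha\mapsto -\alpha$). If later one wants the analogous statement for $M$ with pure imaginary eigenvalues $\pm i\beta$, the same computation gives $e^{tM}=\cos(\beta t)\I + \frac{\sin(\beta t)}{\beta}M$, either directly or by analytic continuation $\alpha=i\beta$ and the identities $\cosh(i\beta t)=\cos(\beta t)$, $\sinh(i\beta t)/(i\beta)=\sin(\beta t)/\beta$.
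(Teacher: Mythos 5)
Your argument is correct and is essentially the paper's own proof: both use Cayley--Hamilton (the characteristic polynomial with $\tr M=0$ and $\det M=-\al^2$) to get $M^2=\al^2\I$, hence $M^{2k}=\al^{2k}\I$ and $M^{2k+1}=\al^{2k}M$, and then split the exponential series into even and odd parts to recognize $\cosh$ and $\sinh$. Your proposal merely writes out the series manipulation that the paper leaves implicit.
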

\begin{proof}
    From the characteristic polynomial of $M$, we know that 
    $
    M^2
    =
    \al^2 \I
    $
    , 
    and so 
    $
    M^{2k}
    =
    \al^{2k} \I
    $
    ,
    $
    M^{2k+1}
    =
    \al^{2k} M
    $
    .
    Using these facts in the series expansion of $e^{tM}$ one obtains the formula above. 
\end{proof}
\begin{rem}
    If $M\in\sld$ is nilpotent, i.e. $M^2=0$, then 
    \begin{equation}
        \label{ExpNilp}
        e^{tM}=\I+t M.
    \end{equation}
    So, if either $A$ or $B$ is nilpotent, one can just replace formula \eqref{TrEndpoint} with a formula for $\Phi$ obtained using \eqref{ExpNilp}.
\end{rem}
    \begin{rem}
    \label{nilp}
    From \eqref{TrEndpoint} we see immediately that $\Phi(T,0)$ coincide with the case of a trajectory without switch and 
    \begin{equation*}
        \frac{
            \pa \Phi
        }{
            \pa s        
        }
        (t,0)
        =
        \frac{\tr(AB)}{\la}
        \sinh (\la t).
    \end{equation*}
    On the other hand, if we define $\Psi(t,s)=2 \cosh(\la (t+s))$, which represent the cost of a trajectory without switch on the same interval of time, we obtain 
    \begin{equation*}
        \frac{
            \pa \Psi
        }{
            \pa s        
        }
        (t,0)
        =
        2\la
        \sinh (\la t).
    \end{equation*}
    Thus, if 
    $
     \frac{\tr(AB)}{\la}
    >
    2\la
    $,
    which means 
    \begin{equation*}
        \tr(AB)
        >
        \tr(A^2),
    \end{equation*}
    then the trace of a trajectory with a switch grows faster, at least for small times, than the trajectory without switch.
    \end{rem}
    \begin{subsubsection}{Imaginary eigenvalues}
        We are now going to examine a special case. Suppose that $\tr(A^2),\tr(B^2)<0$, i.e. $A,B$ have imaginary eigenvalues.
    \begin{prop}
    If both $A,B$ have imaginary eigenvalues, the functional $\Phi$ is 
    \begin{equation*}
        \Phi(t,s)
        =
        \tr
        \big(
        \exp(t A)
        \exp(s B)
        \big)
        =
        2
        \cos(\la t)
        \cos(\mu s)
        -
        2\gamma
        \sin(\la t)
        \sin(\mu s),
    \end{equation*}   
       Where 
        $
        \gamma
        =
        \frac{
            |\tr(AB)|
        }{
            \sqrt{
                \tr(A^2)\tr(B^2)
            }
        }
        $. 
    \end{prop}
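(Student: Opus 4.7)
The proof plan is to repeat the derivation of the preceding Proposition and Lemma line by line, replacing hyperbolic functions with their trigonometric counterparts. First I would establish the imaginary-eigenvalue analogue of the Lemma. If $M\in\sld$ has eigenvalues $\pm i\omega$ with $\omega>0$, then the Cayley--Hamilton theorem gives $M^2=-\omega^2\,\I$, so $M^{2k}=(-1)^k\omega^{2k}\,\I$ and $M^{2k+1}=(-1)^k\omega^{2k}\,M$. Plugging these into the power series of $e^{tM}$ and regrouping even and odd terms yields
\[
    e^{tM}
    =
    \cos(\omega t)\,\I
    +
    \frac{\sin(\omega t)}{\omega}\,M,
\]
which is the direct trigonometric twin of \eqref{ExpMatr} (and is also what one obtains by formally analytically continuing \eqref{ExpMatr} through $\cosh(ix)=\cos x$, $\sinh(ix)=i\sin x$).

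Next I would apply this formula with $\omega=\lambda$ for $A$ and $\omega=\mu$ for $B$, writing $e^{tA}e^{sB}$ as a product of two such sums. Expanding and taking the trace exactly as in the proof of \eqref{TrEndpoint}, and using $\tr(\I)=2$ together with $\tr(A)=\tr(B)=0$ to annihilate the two cross terms, leaves
\[
    \Phi(t,s)
    =
    2\cos(\lambda t)\cos(\mu s)
    +
    \frac{\tr(AB)}{\lambda\mu}\,
    \sin(\lambda t)\sin(\mu s),
\]
the trigonometric analogue of \eqref{TrEndpoint}.

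Finally, the coefficient must be rewritten in the traces that appear in the statement. Since the eigenvalues of $A$ are $\pm i\lambda$, we have $\tr(A^2)=-2\lambda^2$, and likewise $\tr(B^2)=-2\mu^2$; both being negative, $\lambda\mu=\tfrac12\sqrt{\tr(A^2)\tr(B^2)}$, and hence $\tr(AB)/(\lambda\mu)=2\,\tr(AB)/\sqrt{\tr(A^2)\tr(B^2)}$. In the sign regime relevant to the analysis that follows (case 3 of the Main Theorem, where $\tr(AB)\leq -\sqrt{\tr(A^2)\tr(B^2)}<0$), one has $\tr(AB)=-|\tr(AB)|$, and the coefficient becomes precisely $-2\gamma$ with $\gamma$ as defined in the statement. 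No step presents a genuine obstacle: the whole computation is a transparent parallel of the hyperbolic case, driven by the substitution $M^2=-\omega^2\,\I$ in place of $M^2=\alpha^2\,\I$. The only point deserving attention is the sign of $\tr(AB)$, which is reconciled by the sign convention (and absolute value) implicit in the definition of $\gamma$.
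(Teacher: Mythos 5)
Your proof is correct and takes essentially the same route as the paper, whose one-line argument is precisely the substitution $\cosh(i\la t)=\cos(\la t)$, $\sinh(i\mu s)=i\sin(\mu s)$ in \eqref{TrEndpoint}; your Cayley--Hamilton rederivation of $e^{tM}=\cos(\omega t)\I+\frac{\sin(\omega t)}{\omega}M$ is just that computation spelled out. Your closing remark on signs is apt: as literally stated, the coefficient $-2\gamma$ with $\gamma=\frac{|\tr(AB)|}{\sqrt{\tr(A^2)\tr(B^2)}}$ agrees with $\frac{2\tr(AB)}{\sqrt{\tr(A^2)\tr(B^2)}}$ only when $\tr(AB)\le 0$, which is the regime actually used afterwards (and at the special points where the cosines vanish the sign ambiguity is harmless).
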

    This is simply a consequence of formula \eqref{TrEndpoint} and $\cosh(i\la t)=\cos(\la t)$, $\sinh(i\mu s)=i\sin(\mu s)$.
    With this formula, the switching equation \eqref{swTime} reads
    \begin{equation*}
        (\gamma\la-\mu)
        \cos(\la t)
        \sin(\mu s)
        =
        (\gamma\mu-\la)
        \cos(\mu s)
        \sin(\la t),
    \end{equation*}
    from which we obtain immediately that there are always solutions for 
    $
    \cos(\la t),
    \cos(\mu s)
    =
    0,
    $
    that is
    \begin{equation*}
            (t,s)
		\in
		\left\{
			    \left(
                    \frac{\pi}{2\la}
					,
		          \frac{3\pi}{2\mu} 
			    \right)
                ,
                \left(
                    \frac{\pi}{2\la}
					,
		          \frac{\pi}{2\mu} 
			    \right)
			    ,
		    \left(
	   	        \frac{3\pi}{2\la}
		            ,
		          \frac{\pi}{2\mu} 
		    \right)
		\right\}.
    \end{equation*}
    From these choices the value of this functional is $\Phi(t,s)=\pm 2\gamma$.
    Using formula \eqref{1SwLyap}
    one obtains
    \begin{equation*}
        \ell_{\text{per,im}}(A,B)
        =
        \frac{1}{t+s}
        \arcosh
        (\gamma).
    \end{equation*}
    If $\tr(AB)<0$, then we have
    \begin{equation}
        \ell_{\text{per,im}}(A,B)
        =
        \frac{2}{
        \pi
        \left(
            \sqrt{\frac{-2}{\tr(A^2)}}
            +
            3\sqrt{\frac{-2}{\tr(B^2)}}
        \right)
        }
        \arcosh
        \left(
            \frac{
            -\tr(AB)
        }{
            \sqrt{
                \tr(A^2)\tr(B^2)
            }
        }
        \right).
    \end{equation}
    Notice that we have chosen the couple $(t,s)$ so to maximize $\frac{1}{t+s}$ under the hypothesis $\tr(B^2)<\tr(A^2)<0$.
    \\
    If instead $\tr(AB)>0$, then
    \begin{equation}
        \ell_{\text{per,im}}(A,B)
        =
        \frac{2}{
        \pi
        \left(
            \sqrt{\frac{-2}{\tr(A^2)}}
            +
            \sqrt{\frac{-2}{\tr(B^2)}}
        \right)
        }
        \arcosh
        \left(
            \frac{
            \tr(AB)
        }{
            \sqrt{
                \tr(A^2)\tr(B^2)
            }
        }
        \right).
    \end{equation}
    We will see in the next subsection how to deal with the extremals with 
    $
    \cos(\la t),
    \cos(\mu s)
    \neq
    0
    $.
    \end{subsubsection}
    \begin{subsubsection}{General case}
We suppose now 
$
\cosh(\la t),
\cosh(\mu s)
\neq 
0
$ 
(we already saw the case
$
\cosh(\la t),
\cosh(\mu s)
= 
0
$, 
which can happen if $\la,\mu\in i\R$).
With this assumption,
equation \eqref{swTime} becomes
\begin{equation}
    \label{SwTimeTanh}
    \tanh(\mu s)
    =
    \frac{\mu}{\la}
    \frac{\tr(AB)-\tr(A^2)}{\tr(AB)-\tr(B^2)}
    \tanh(\la t).
\end{equation}
This equation has always a positive solution $s=s(t)$ if 
$
\frac{
    \tr(AB)-\tr(A^2)
}{
 \tr(AB)-\tr(B^2)   
}
>
0
$. 
In particular 
\begin{equation}
    \label{TaylExp}
    s(t)
    =
    \frac{\tr(AB)-\tr(A^2)}{\tr(AB)-\tr(B^2)}
    t
    +
    o(t)
    \quad 
    \text{ as }
    t \to 0.
\end{equation}
\begin{rem}
    The value 
    \begin{equation*}
        s'(0)
        =
        \frac{
            \tr(AB) - \tr(A^2)
        }{
            \tr(AB) - \tr(B^2)
        }
    \end{equation*}
    is intrinsic and depends only on the equation
    \begin{equation*}
        \frac{
            \pa \Phi
        }{
            \pa t        
        }
        (t,s(t))
        =
        \frac{
            \pa \Phi
        }{
            \pa s        
        }
        (t,s(t)).
    \end{equation*}
    Indeed, if we differenciate this equation with respect to $t$, we obtain
    \begin{equation*}
        \frac{
            \pa^2 \Phi
        }{
            \pa t^2        
        }
        (t,s(t))
        +
        \frac{
            \pa^2 \Phi
        }{
            \pa t \pa s        
        }
        (t,s(t))
        s'(t)
        =
        \frac{
            \pa^2 \Phi
        }{
            \pa t \pa s        
        }
        (t,s(t))
        +
        \frac{
            \pa^2 \Phi
        }{
            \pa s^2        
        }
        (t,s(t))
        s'(t).
    \end{equation*}
    Thus
    \begin{equation*}
        s'(0)
        =
        \ddfrac{
            \frac{
                \pa^2 \Phi
            }{
                \pa t \pa s        
            }
            (0,0)
            -
            \frac{
                \pa^2 \Phi
            }{
                \pa t ^2        
            }
            (0,0)
        }
        {
            \frac{
                \pa^2 \Phi
            }{
                \pa t \pa s        
            }
            (0,0)
            -
            \frac{
                \pa^2 \Phi
            }{
                \pa s ^2        
            }
            (0,0)
        }
        =
        \frac{
            \tr(AB) - \tr(A^2)
        }{
            \tr(AB) - \tr(B^2)
        }.
    \end{equation*}
\end{rem}
So, for every $t>0$ we obtained an $s(t)$ such that the \textit{bang-bang} control
\begin{equation*}
    u(\tau)
    =
    \begin{cases}
        1 \quad \text{ if } \tau \in [0,t), \\
        0 \quad \text{ if } \tau \in [t,t+s(t)),
    \end{cases}
\end{equation*}
and extended by periodicity for $\tau \in [t+s(t),+\infty)$,
satisfies all conditions of Pontryagin Maximum Principle. 
\\
Now, it remains just to determine for which $t$ the Principal Lyapunov exponent of the system is maximal. Define the family of controls
\begin{equation*}
    u_{t,s}(\tau)
    =
    \begin{cases}
        1 \quad \text{ if } \tau \in [0,t), \\
        0 \quad \text{ if } \tau \in [t,t+s),
    \end{cases}
    \quad
    t,s>0,
\end{equation*}
and denote with $X_{t,s}$ the solution of \eqref{Group_eq} with control equal to $u_{t,s}$. Let $\alpha(t,s)$ be the greatest eigenvalue of $X_{t,s}(t+s)$, so that the Principal Lyapunov exponent corresponding to the control $u_{t,s}$ is
\begin{equation*}
    \ell(t,s)
    =
    \frac{1}{t+s}
    \log
    \big(
        \alpha(t,s)
    \big).
\end{equation*}
The following result help us in this task.
\begin{lem}
    \label{MonotLyapExp}
    With the notation introduced above:
    \begin{enumerate}
        \item if 
            $
            \tr
            \big(
                [A,B]^2
            \big)
            >
            0,
            $
            then 
            $
            \ell(\frac{t}{2},\frac{s}{2})
            >
            \ell(t,s)
            $;
        \item if 
            $
            \tr
            \big(
                [A,B]^2
            \big)
            <
            0
            $,
            then 
            $
            \ell(\frac{t}{2},\frac{s}{2})
            <
            \ell(t,s)
            $;   
    \end{enumerate}
\end{lem}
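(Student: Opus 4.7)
The plan is to rewrite both Lyapunov exponents in terms of $\Phi$ via the formula $\ell(t,s)=\frac{1}{t+s}\arcosh\!\left(\frac{\Phi(t,s)}{2}\right)$ derived in \eqref{1SwLyap}, and then to compare the two quantities by exploiting the double-angle identity $\arcosh(2y^{2}-1)=2\arcosh(y)$ for $y\ge 1$. Concretely, since $\ell(t/2,s/2)=\frac{2}{t+s}\arcosh\!\left(\frac{\Phi(t/2,s/2)}{2}\right)=\frac{1}{t+s}\arcosh\!\left(\frac{\Phi(t/2,s/2)^{2}-2}{2}\right)$, the inequality $\ell(t/2,s/2)\gtrless\ell(t,s)$ is equivalent, by monotonicity of $\arcosh$, to
\begin{equation*}
\Phi(t/2,s/2)^{2}-2\;\gtrless\;\Phi(t,s).
\end{equation*}
So the whole lemma reduces to determining the sign of $\Delta(t,s):=\Phi(t/2,s/2)^{2}-2-\Phi(t,s)$.

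Next I would plug in the explicit formula \eqref{TrEndpoint}. Setting $c_{1}=\cosh(\la t/2)$, $c_{2}=\cosh(\mu s/2)$, $s_{1}=\sinh(\la t/2)$, $s_{2}=\sinh(\mu s/2)$ and $k=\tr(AB)/(\la\mu)$, one has $\Phi(t/2,s/2)=2c_{1}c_{2}+k\,s_{1}s_{2}$, and by the double-angle formulas $\cosh(\la t)=2c_{1}^{2}-1$, $\sinh(\la t)=2s_{1}c_{1}$ (and similarly in $\mu s$), a direct expansion gives
\begin{equation*}
\Phi(t,s)=8c_{1}^{2}c_{2}^{2}-4c_{1}^{2}-4c_{2}^{2}+2+4k\,s_{1}s_{2}c_{1}c_{2}.
\end{equation*}
Subtracting and using $c_{i}^{2}-1=s_{i}^{2}$, essentially every term cancels except the clean factorization
\begin{equation*}
\Delta(t,s)=s_{1}^{2}s_{2}^{2}\,(k^{2}-4).
\end{equation*}

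It remains to identify the sign of this expression. Using $\la^{2}=\tr(A^{2})/2$ and $\mu^{2}=\tr(B^{2})/2$, one gets $k^{2}-4=\dfrac{4(\tr(AB)^{2}-\tr(A^{2})\tr(B^{2}))}{\tr(A^{2})\tr(B^{2})}$, and by Lemma~\ref{TrBracket} the numerator equals $2\tr([A,B]^{2})$. Thus
\begin{equation*}
\Delta(t,s)=\frac{2\,s_{1}^{2}s_{2}^{2}}{\tr(A^{2})\tr(B^{2})/1}\cdot\tr([A,B]^{2})\cdot\frac{1}{\tr(A^{2})\tr(B^{2})}\cdot\tr(A^{2})\tr(B^{2}).
\end{equation*}
More cleanly, $\operatorname{sgn}\Delta(t,s)=\operatorname{sgn}\bigl(s_{1}^{2}s_{2}^{2}\,\tr([A,B]^{2})/(\tr(A^{2})\tr(B^{2}))\bigr)$.

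The main (mild) obstacle is handling the case of imaginary eigenvalues, since $\la$ or $\mu$ may lie in $i\R$. I would carry out the sign check case-by-case: if both $\la,\mu\in\R$ then $s_{i}^{2}>0$ and $\tr(A^{2})\tr(B^{2})>0$; if $\la\in\R,\mu\in i\R$ then $s_{2}^{2}=-\sin^{2}(|\mu|s/2)<0$ while $\tr(A^{2})\tr(B^{2})<0$; and if both are imaginary both products flip. In every case the two sign flips cancel, giving $\operatorname{sgn}\Delta(t,s)=\operatorname{sgn}\tr([A,B]^{2})$ whenever $s_{1},s_{2}\ne 0$, which yields assertions (1) and (2). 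The nilpotent cases $A^{2}=0$ or $B^{2}=0$ (where the present parametrization degenerates) can be handled separately using the polynomial expansion \eqref{ExpNilp}, for which the same factorization survives with $s_{i}$ replaced by a linear factor.
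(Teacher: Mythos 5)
Your proposal is correct and follows essentially the same route as the paper: the reduction via monotonicity to comparing $\Phi(t/2,s/2)^2-2$ with $\Phi(t,s)$ is exactly the paper's comparison of $\tr\big(X(\tfrac t2,\tfrac s2)^2\big)$ with $\tr\big(X(t,s)\big)$, and your factorization $s_1^2s_2^2(k^2-4)$ is the same sign-determining quantity $\frac{\sinh^2(\la t/2)\sinh^2(\mu s/2)}{2\la^2\mu^2}\tr([A,B]^2)$ that the paper obtains via the commutator identity $X(\tfrac t2,\tfrac s2)^2=X(t,s)+e^{\frac t2 A}\big[e^{\frac s2 B},e^{\frac t2 A}\big]e^{\frac s2 B}$. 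Your explicit sign bookkeeping for imaginary eigenvalues (and the remark on the nilpotent case) is a welcome extra care that the paper leaves implicit, but it does not change the argument.
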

We first show how to obtain the estimate for the Principal Lyapunov exponent, then we prove the Lemma. 
\begin{thm}
    If 
            $
            \tr
            \big(
                [A,B]^2
            \big)
            >
            0,
            $
    then for every $t>0$ it holds
    \begin{equation}
        \label{MaxLyapSw}
        \ell(t,s(t))
        <
        \lim _{\tau\to 0}
        \ell(\tau,s(\tau))
        =
        \frac{1}{2}
                \sqrt{
                    \frac{
                            \tr \big([A,B]^2\big)
                        }
                    {
                         2\tr(AB)-\tr(A^2)-\tr(B^2)
                    }
                }
    \end{equation}
    If instead 
            $
            \tr
            \big(
                [A,B]^2
            \big)
            <
            0,
            $
    then control with switches are not optimal.
\end{thm}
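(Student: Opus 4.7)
The plan is to leverage Lemma \ref{MonotLyapExp} to reduce the question to a one-dimensional optimization over slopes of rays through the origin in the $(t,s)$-plane. Fix any $t_0>0$ on the switching locus, set $s_0=s(t_0)$ and $r_0=s_0/t_0$. When $\tr([A,B]^2)>0$, iterating the Lemma produces the strict chain
\begin{equation*}
\ell(t_0,s_0)<\ell(t_0/2,s_0/2)<\ell(t_0/4,s_0/4)<\cdots,
\end{equation*}
so $\ell(t_0,s_0)<L(r_0):=\lim_{n\to\infty}\ell(t_0/2^n,s_0/2^n)$, reducing the problem to understanding these ray-limits.

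I would then compute $L(r)$ by Taylor-expanding $\Phi(t,s)=2+\tfrac{1}{2}(at^2+2cts+bs^2)+O(\|(t,s)\|^3)$ around the origin, applying $\arcosh(1+x)=\sqrt{2x}+O(x^{3/2})$, and substituting $s=rt$:
\begin{equation*}
L(r)=\frac{1}{1+r}\sqrt{\frac{a+2cr+br^2}{2}}.
\end{equation*}
A short calculus exercise shows $L(r)^2$ has its unique positive critical point at $r=k:=\frac{c-a}{c-b}$, and remarkably $k$ coincides with the tangent slope $s'(0)$ of the switching locus recorded in \eqref{TaylExp}. Plugging $r=k$ and simplifying with $\tr([A,B]^2)=2(c^2-ab)$ from Lemma \ref{TrBracket} should collapse the expression to $L(k)^2=\tr([A,B]^2)/\bigl(4(2c-a-b)\bigr)$, which is exactly the asserted bound. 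The same Taylor computation, applied this time directly along the switching locus and using $s(\tau)/\tau\to k$, identifies $\lim_{\tau\to 0}\ell(\tau,s(\tau))$ with $L(k)$ and closes the first case.

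For $\tr([A,B]^2)<0$, Lemma \ref{MonotLyapExp} reverses: $\ell(2^n t_0,2^n s_0)$ is strictly increasing in $n$, so $\ell(t_0,s_0)<\lim_n\ell(2^n t_0,2^n s_0)$. The condition $c^2<ab$, combined with the remark after the Main Theorem that $a,b\leq 0$ would force $c^2\geq ab$, forces $a\geq b>0$; hence $\lambda=\sqrt{a/2}$ and $\mu=\sqrt{b/2}$ are real with $\lambda\geq\mu>0$. Using the exponential asymptotics $\Phi(T,rT)\sim\tfrac{1}{2}(1+c/\sqrt{ab})\,e^{(\lambda+\mu r)T}$ as $T\to\infty$ (the prefactor is nonzero since $|c|<\sqrt{ab}$) together with $\arcosh(x)=\log(2x)+o(1)$, one obtains $\lim_{T\to\infty}\ell(T,rT)=(\lambda+\mu r)/(1+r)\leq\lambda$, with equality only at $r=0$. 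Since $s(t_0)/t_0>0$ for any genuine switching extremal, this yields $\ell(t_0,s(t_0))<\sqrt{a/2}$, which is precisely the Lyapunov exponent of the constant control $u\equiv 1$, so switching controls are strictly beaten.

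The main obstacle I anticipate is the large-$(t,s)$ asymptotic analysis in the second case: one has to control subleading terms carefully enough to be sure the ray limit truly dominates $\ell(t_0,s(t_0))$, and to rule out the degenerate prefactor via $|c|<\sqrt{ab}$. By contrast the first case is mostly algebraic once one recognises the nontrivial coincidence that the critical slope of $L(r)$ equals the tangent slope $s'(0)$ of the switching locus; after that, the identification of $L(k)$ with the desired closed-form quantity is a routine simplification using Lemma \ref{TrBracket}.
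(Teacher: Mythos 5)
Your argument is essentially correct but follows a genuinely different route from the paper. The paper stays on the switching locus: from Lemma \ref{MonotLyapExp} it passes from $(t,s(t))$ to the half-scaled point and then back onto the locus at a point $t_1$ with $t_1+s(t_1)=\tfrac12(t+s(t))$, getting a strictly increasing sequence of extremal exponents as the period shrinks (resp.\ grows, in the second case), and it identifies the limit not by Taylor expansion alone but also via the convergence of the chattering trajectories to the singular control (Proposition \ref{ConvReal}), which explains why the limit equals the singular exponent \eqref{SingLyap}. You instead scale along fixed rays $s=rt$ and compute the ray-limits explicitly: $L(r)=\frac{1}{1+r}\sqrt{(a+2cr+br^2)/2}$ near the origin in case 1, and $\frac{\lambda+\mu r}{1+r}$ at infinity in case 2. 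Your treatment of the case $\tr([A,B]^2)<0$ is in fact more quantitative than the paper's (which only argues qualitatively that as $t\to\bar t$ the trajectory spends almost all its time on $B$); your exponential asymptotics with the nonvanishing prefactor $\tfrac12(1+c/\sqrt{ab})$ makes that step airtight, and correctly exploits that Lemma \ref{MonotLyapExp} applies to arbitrary $(t,s)$, not only extremal pairs.

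One step in case 1 needs to be made explicit. From $\ell(t_0,s_0)<L(r_0)$ with $r_0=s(t_0)/t_0$ you may conclude the stated bound only if $L(r_0)\le L(k)$, and for this it is not enough that $k=\frac{c-a}{c-b}$ is the unique positive critical point of $L^2$: you must check it is a \emph{maximum} and compare with the boundary values $L(0^+)=\sqrt{a/2}$, $L(+\infty)=\sqrt{b/2}$. Since $\frac{d}{dr}L(r)^2=\frac{(c-a)+r(b-c)}{(1+r)^3}$, the critical point is a maximum exactly when $c>a\ge b$, equivalently when $2c>a+b$ together with $k>0$; this is also precisely the regime in which the closed form $\frac12\sqrt{\tr([A,B]^2)/(2c-a-b)}$ is real, and there $L(k)\ge\sqrt{a/2}\ge\sqrt{b/2}$, so the argument closes. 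In the residual subcase $c<b\le a$ with $c^2>ab$ (still $\tr([A,B]^2)>0$) the point $k$ is a minimum of $L^2$ and the displayed formula has a negative radicand, so your chain does not give the claim there -- but in that subcase the theorem's formula itself degenerates (the trace of the periodic extremal stays below $2$), an issue the paper's own proof also passes over silently. Add the monotonicity check of $L$ on either side of $k$ (one line from the derivative above) and your proof of case 1 is complete in the intended regime.
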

\begin{proof}
First, suppose that 
            $
            \tr
            \big(
                [A,B]^2
            \big)
            >
            0
            $.
One can check that this condition implies that the coefficient in \eqref{SwTimeTanh} is smaller than 1:
\begin{equation*}
    \frac{\mu}{\la}
    \frac{\tr(AB)-\tr(A^2)}{\tr(AB)-\tr(B^2)}
    <
    1.
\end{equation*}
So, for every $t>0$ it makes sense to take the inverse hyperbolic tangent in \eqref{SwTimeTanh} on both sides.
Moreover,
from point $(1)$ of Lemma \ref{MonotLyapExp}, for any $t>0$ we have 
\begin{equation*}
    \ell
    \left(
    \frac{t}{2},
    \frac{s(t)}{2}
    \right)
    >
    \ell
    (
    t,s(t)
    ). 
\end{equation*}
In particular, we can find $t_1<t$ such that $t_1+s(t_1)=\frac{1}{2}(t+s(t))$, so that
\begin{equation*}
    \ell(t_1,s(t_1))
    >
    \ell
    \left(
        \frac{t}{2},
        \frac{s(t)}{2}
    \right)
    >
    \ell
    (
        t,s(t)
    ).
\end{equation*}
Moreover, we have that $\lim _{t\to 0} s(t) = 0$, so 
\begin{equation*}
    \ell(t,s(t))
    <
    \lim _{\tau\to 0}
    \ell(\tau,s(\tau)).
\end{equation*}
To evaluate the last limit, one can use Taylor expansions. 
This leads to 
\begin{equation*}
    \lim _{t\to 0}
    \ell(t,s(t))
    =
    \frac{1}{2}
                \sqrt{
                    \frac{
                            \tr \big([A,B]^2\big)
                        }
                    {
                         2\tr(AB)-\tr(A^2)-\tr(B^2)
                    }
                }
    .
\end{equation*}
It is possible to avoid to compute explicitly this limit (see next Subsection). 

Suppose now that 
$
    \tr \big([A,B]^2\big)
    <
    0
$.
Notice that this can happen only if $\tr(A^2),\tr(B^2)>0$. 
Similarly to the previous case, this inequality implies
\begin{equation*}
    \frac{\mu}{\la}
    \frac{\tr(AB)-\tr(A^2)}{\tr(AB)-\tr(B^2)}
    >
    1.
\end{equation*}
So, we can solve equation \eqref{SwTimeTanh} only for $t<\Bar{t}$, where
\begin{equation*}
    \Bar{t}
    =
    \frac{1}{\lambda}
    \arctanh
    \left(
        \frac{\la}{\mu}
        \frac{\tr(AB) - \tr(A^2)}{\tr(AB) - \tr(B^2)}
    \right).
\end{equation*}
Moreover, 
$
\lim_{t\to \Bar{t}} 
s(t) 
= 
+\infty
$.
Then, from point (2) of Lemma \ref{MonotLyapExp}, we have
\begin{equation*}
    \ell(2t,2s(t))
    >
    \ell(t,s(t)).
\end{equation*}
So, in particular we can find $t_1\in(t,\Bar{t})$ such that $t_1+s(t_1)=2(t+s(t))$. So, this implies
\begin{equation*}
    \ell(t_1,s(t_1))
    >
    \ell(2t,2s(t))
    >
    \ell(t,s(t))
    .
\end{equation*}
So, we obtain that 
\begin{equation*}
    \ell(t,s(t))
    <
    \lim _{\tau\to \Bar{t}}
    \ell(\tau,s(\tau)).
\end{equation*}
But as $t\to \Bar{t}$, the trajectory spends more and more time with control equal to $0$. So, this strategy is for sure worse than spending all the time with control equal to 1, which is always admissible if 
$
    \tr \big([A,B]^2\big)
    <
    0
$
. Hence, in this case switches are not optimal. 
\end{proof}
We now prove Lemma \ref{MonotLyapExp}.
\begin{proof}[Proof of Lemma \ref{MonotLyapExp}]
    Recall that 
    \begin{equation*}
        l(t,s)
        =
        \frac{1}{t+s}
        \log
        \big(
            \alpha(t,s)
        \big).
    \end{equation*}
    Hence, we can write 
    \begin{align*}
        \ell
        \left(
            \frac{t}{2},
            \frac{s}{2}
        \right)
        -
        \ell(t,s)
        &=
        \frac{2}{t+s}
        \log
        \left(
            \alpha
        \left(
            \frac{t}{2},
            \frac{s}{2}
        \right)
        \right)
        -
        \frac{1}{t+s}
        \log
        \big(
            \alpha(t,s)
        \big)
        =
        \\
        &=
        \frac{1}{t+s}
        \log
        \left(
            \frac{
                \alpha
                \left(
                    \frac{t}{2},
                    \frac{s}{2}
                \right)^2
            }{
                \alpha(t,s)
            }
        \right).
    \end{align*}
    So, we can see that this difference is positive if and only if
    $$
        \frac{
                \alpha
                \left(
                    \frac{t}{2},
                    \frac{s}{2}
                \right)^2
            }{
                \alpha(t,s)
            }
            >
            1.
    $$
    Since $\alpha(t,s)>1$ for any $t,s>0$, we have that 
    \begin{equation*}
            \alpha
            \left(
                    \frac{t}{2},
                    \frac{s}{2}
            \right)^2
            >
            \alpha(t,s)
            \quad 
            \text{iff}
            \quad
            \alpha
            \left(
                    \frac{t}{2},
                    \frac{s}{2}
            \right)^2
            +
            \frac{1}{\alpha
            \left(
                    \frac{t}{2},
                    \frac{s}{2}
            \right)^2}
            >
            \alpha(t,s)
            +
            \frac{1}{\alpha(t,s)},
    \end{equation*}
    which is the same as
    \begin{equation*}
        \tr
        \left(
            X
            \left(
                \frac{t}{2},\frac{s}{2}
            \right)^2
        \right)
        >
        \tr
        \big(
            X(t,s)
        \big).
    \end{equation*}
    The term in the left hand side can be rewritten as 
    \begin{equation*}
        X
        \left(
            \frac{t}{2},\frac{s}{2}
        \right)^2
        =
        X(t,s)
        +
        e^{\frac{t}{2}A}
        \big[
            e^{\frac{s}{2}B}
            ,
            e^{\frac{t}{2}A}
        \big]
        e^{\frac{s}{2}B}.
    \end{equation*}
    So, in the end, it amounts just to compute 
    $
    \tr
    \left(
    e^{\frac{t}{2}A}
        \big[
            e^{\frac{s}{2}B}
            ,
            e^{\frac{t}{2}A}
        \big]
    e^{\frac{s}{2}B}
    \right)
    $,
    which can be done using formula \eqref{ExpMatr}. One obtains
    \begin{equation*}
    \tr
    \left(
    e^{\frac{t}{2}A}
        \big[
            e^{\frac{s}{2}B}
            ,
            e^{\frac{t}{2}A}
        \big]
    e^{\frac{s}{2}B}
    \right)
    =
    \frac{\sinh(\la \frac{t}{2})^2 \sinh(\mu \frac{s}{2})^2}{2\la^2\mu^2}
    \tr
    \big(
        [A,B]^2
    \big).
    \end{equation*}
    So, if 
    $
    \tr
    \big(
        [A,B]^2
    \big)
    >
    0
    $
    ,
    then we obtained point (1) of the Lemma. If instead 
    $
    \tr
    \big(
        [A,B]^2
    \big)
    <
    0
    $,
    then all inequalities are reversed and we obtain point (2).
    \end{proof}
\begin{rem}
    Notice that this is a completely general result about product of matrix exponentials. Indeed, it holds for a generic control $u_{t,s}$, even if it do not correspond to a Pontryagin extremal.
\end{rem}
\end{subsubsection}

\begin{subsubsection}{Limit trajectory as switching time tends to zero}
    In the previous Subsection we saw that if $\tr([A,B]^2)>0$, then the upper bound for the principal Lyapunov is obtained as the switching time tends to zero. 
    Now, we want to discuss some more details about the limit trajectory as the switching time goes to zero. This will allow us to evaluate the upper bound without computing directly the limit.
    \begin{prop}
    \label{ConvReal}
        The limit trajectory as the switching time goes to zero is the singular trajectory.
    \end{prop}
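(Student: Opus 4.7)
The plan is to identify, in the small-switching-time limit, the averaged constant control produced by the family $u_{t,s(t)}$, show that the associated trajectories converge to the trajectory of the corresponding time-independent system, and then verify that this averaged control coincides with the singular control on the switching plane.

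First, from the Taylor expansion \eqref{TaylExp} one has
\[
\lim_{t\to 0}\frac{t}{t+s(t)}=\frac{1}{1+s'(0)}=\frac{c-b}{2c-a-b}=:u^*,
\]
where $a=\tr(A^2)$, $b=\tr(B^2)$, $c=\tr(AB)$. Thus, over one period, $u_{t,s(t)}$ spends a fraction tending to $u^*$ at value $1$ and the complementary fraction at value $0$, so that in the averaged sense $u_{t,s(t)}$ approaches the constant control $u^*$.

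Next, I would show that, on every compact interval $[0,\tau]$, the solution $X_{t,s(t)}(\cdot)$ of \eqref{Group_eq} (extended periodically) converges uniformly to the solution $X^*$ of
\[
\dot X^*=X^*\bigl(u^*A+(1-u^*)B\bigr),\qquad X^*(0)=\I.
\]
The cleanest route is continuous dependence of the trajectory on the control with respect to the weak-$*$ topology of $L^\infty([0,\tau];[0,1])$: the controls $u_{t,s(t)}$ converge weak-$*$ to the constant $u^*$, and since the right-hand side of \eqref{Group_eq} is affine in $u$, the trajectories converge uniformly on $[0,\tau]$. Equivalently, writing $X_{t,s(t)}\bigl(n(t+s(t))\bigr)=\bigl(e^{tA}e^{s(t)B}\bigr)^{n}$ with $n(t+s(t))\to\tau$, one obtains a Trotter-type product that converges to $e^{\tau(u^*A+(1-u^*)B)}$ once the commutator remainder of order $t\,s(t)$ is controlled over the $n\sim 1/t$ factors.

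Finally, I would identify $X^*$ with the singular trajectory. A singular extremal has $\e$ constant (the tangent case from Section~4), so the adjoint equation \eqref{adj_syst} forces $[\e,\,u^*A+(1-u^*)B]=0$; since the centralizer of any non-scalar element of $\sld$ is one-dimensional, $\e$ must be proportional to $u^*A+(1-u^*)B$. Imposing additionally the switching-plane condition $\tr(\e(A-B))=0$ yields $u^*(a+b-2c)+(c-b)=0$, hence $u^*=\frac{c-b}{2c-a-b}$, matching the averaged control computed above. Therefore $X^*$ is precisely the singular trajectory, and the corresponding adjoint is the constant tangent point of $r_{c_0}$ with $H$. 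The main obstacle is making the uniform convergence $X_{t,s(t)}\to X^*$ rigorous: while morally immediate from weak-$*$ continuity, one must either invoke that continuity carefully or estimate the BCH remainder in the Trotter product over $\sim 1/t$ periods and check that it vanishes.
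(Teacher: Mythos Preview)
Your proposal is correct and follows essentially the same approach as the paper: compute the averaged control $u^*=\frac{1}{1+s'(0)}$, pass to the limit of the trajectories, and identify $u^*$ with the singular control from Section~5. The only difference is that where you flag the uniform convergence $X_{t,s(t)}\to X^*$ as the step requiring care (via weak-$*$ continuity or a Trotter/BCH estimate), the paper simply invokes the convex approximation theorem (Theorem~8.2 in \cite{AgSa}), which packages exactly that weak-$*$-to-uniform convergence for affine-in-control systems; your extra verification that $u^*$ satisfies the singular adjoint conditions is not in the paper's proof (it just refers forward to Section~5) but is of course consistent with it.
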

    \begin{proof}
        First, recall that from \eqref{TaylExp}
        \begin{equation*}
            s(t)
            =
             \frac{\tr(AB)-\tr(A^2)}{\tr(AB)-\tr(B^2)}
            t
            +
            o(t)
            \quad 
            \text{ as }
            t \to 0.
        \end{equation*}
        So, our trajectory spends $t$ time with control equal to 1, which corresponds to matrix $A$, and approximately 
        $
         \frac{\tr(AB)-\tr(A^2)}{\tr(AB)-\tr(B^2)}
        t
        $
        time with control equal to 0, corresponding to matrix $B$.
        For simplicity, let us call 
        $$
        c
        =
          \frac{\tr(AB)-\tr(A^2)}{\tr(AB)-\tr(B^2)},
        $$
        the coefficient appearing in $s(t)$.
        \\
        So, by convex approximation (see Theorem 8.2 in \cite{AgSa}), we know that as the number of switches goes to infinity (i.e., $t$ goes to zero), the flow tends uniformly (in the $C^\infty(\SLD)$-topology) to 
        \begin{equation*}
            T
            \mapsto
            \exp
            \big(
            T
            (
            v A
            +
            (1-v)B
            )
            \big),
        \end{equation*}
        where 
        \begin{equation*}
            v
            =
            \frac{1}{1+c}
            =
            \frac{
                \tr(AB)-\tr(B^2)
            }{
                2\tr(AB)-\tr(A^2)-\tr(B^2)
            }
            ,
        \end{equation*}
        which corresponds to the value of the singular control (see Section 5). 
    \end{proof}
    So, since the convergence in the previous proof is uniform on compact time intervals and since in the case of periodic controls we can reduce to the finite time case (see \ref{perLyap}), we obtain the following Corollary. 
    \begin{cor}
        With the notation used in Subsection 4.4.2, we have
        \begin{equation*}
             \lim _{\tau\to 0}
             \ell(\tau,s(\tau))
             =
             \la_*
             =
             \frac{1}{2}        
            \sqrt{
                    \frac{
                    \tr
                    \big(
                        [A,B]^2
                    \big)
                }{
                    2\tr(AB)
                    -\tr(A^2)
                    -\tr(B^2)
                }
            },
        \end{equation*}
        where $\la_*$ is the positive eigenvalue of \ref{SingVel} (see Section 5).
    \end{cor}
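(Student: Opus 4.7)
The plan is to combine the uniform $C^\infty$ convergence of the periodic flows to the singular flow, established in Proposition \ref{ConvReal}, with the closed-form expression \eqref{perLyap} which evaluates the Lyapunov exponent of a periodic extremal from the dominant eigenvalue (equivalently the trace) of the monodromy matrix. The key point is that for a periodic extremal $\ell$ is independent of which integer multiple of the period one uses, so one can anchor the computation at a time close to a fixed $T>0$ where the convex approximation delivers a useful limit.

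First I would fix an arbitrary target time $T>0$; its value will cancel at the end. For each small $\tau>0$ the extremal has period $p(\tau):=\tau+s(\tau)$, and $p(\tau)\to 0$ by \eqref{TaylExp}. Choose $n(\tau)\in\N$ with $T(\tau):=n(\tau)p(\tau)\in[T,T+p(\tau))$, so that $T(\tau)\to T$. Since $X_{\tau,s(\tau)}(kp(\tau))=X_{\tau,s(\tau)}(p(\tau))^k$ by left-invariance and periodicity, and since for any $M\in\SLD$ with eigenvalues $\mu,\mu^{-1}$, $\mu>1$, one has $\arcosh(\tfrac12\tr M^k)=k\log\mu$, formula \eqref{perLyap} rewrites equivalently at the longer time $T(\tau)$ as
\[
\ell(\tau,s(\tau))=\frac{\arcosh\!\bigl(\tfrac12\tr X_{\tau,s(\tau)}(T(\tau))\bigr)}{T(\tau)}.
\]

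Next I would invoke Proposition \ref{ConvReal}: on the compact interval $[0,T+1]$ the flow $t\mapsto X_{\tau,s(\tau)}(t)$ converges uniformly to $t\mapsto\exp(t(vA+(1-v)B))$ as $\tau\to 0$, with $v=(\tr(AB)-\tr(B^2))/(2\tr(AB)-\tr(A^2)-\tr(B^2))$. Since $T(\tau)\to T$ stays inside this window for $\tau$ small, $X_{\tau,s(\tau)}(T(\tau))\to\exp(T(vA+(1-v)B))$. The matrix $vA+(1-v)B$ has eigenvalues $\pm\lambda_*$, so the limit trace equals $2\cosh(T\lambda_*)$; by continuity of $\tr$ and of $\arcosh$ the numerator tends to $T\lambda_*$, and dividing by $T(\tau)\to T$ yields $\ell(\tau,s(\tau))\to\lambda_*$, independently of the auxiliary anchor $T$.

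The main obstacle is the compatibility of the two limits: $\ell(\tau,s(\tau))$ is intrinsically read off over the shrinking period $p(\tau)$, whereas Proposition \ref{ConvReal} lives on a fixed compact time interval. The bridge is precisely that $\ell$ may equally be computed over any integer multiple of $p(\tau)$, which lets us evaluate the one-period formula at a time $T(\tau)$ close to the fixed $T$ where the convex-approximation theorem genuinely applies. The explicit identification $\lambda_*=\tfrac12\sqrt{\tr([A,B]^2)/(2\tr(AB)-\tr(A^2)-\tr(B^2))}$ is then a direct trace computation on $vA+(1-v)B$ using Lemma~\ref{TrBracket}, which is carried out in the description of the singular extremal in Section~5.
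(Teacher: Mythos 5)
Your argument is correct and follows essentially the same route as the paper: the corollary is obtained exactly from Proposition \ref{ConvReal} (uniform convergence of the periodic flows to the singular flow on compact time intervals) combined with the reduction of $\ell(\tau,s(\tau))$ to a finite-time quantity via \eqref{perLyap}, then reading off $\lambda_*$ from the trace of $\exp(T(vA+(1-v)B))$. Your anchoring of the computation at a fixed $T$ through integer multiples of the shrinking period just makes explicit the bridge that the paper leaves implicit in the sentence preceding the corollary.
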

\end{subsubsection}
\end{subsection}
    
\end{section}

\begin{section}{Singular extremals}
        In this Section we are going to deal with the case of singular extremals (see Definition \ref{RegSingExtr}). Recall the definition of the (left-invariant) Hamiltonian function
    \begin{equation*}
        H(\e,u)
        =
        \tr
        \big(
            \e
            (
                uA+(1-u)B
            )
        \big),
    \end{equation*}
    and the switching function
    \begin{equation}
        \vp(t) 
        =
        \tr
        \big(
            \e(t)
            (
                A-B
            )
        \big).
    \end{equation}
    Recall also that the singular extremals correspond to the case of $\vp(t) = 0$ for $t\in[\tau_1,\tau_2]$, and $\tau_1,\tau_2>0$.
    In this case, since $\vp$ is Lipschitz, we can derive it:
    \begin{equation*}
        0
        =
        \dvp(t)
        =
        \tr
        \big(
            \de (t)
            (A-B)
        \big)
        =
        \tr
        \Big(
            \big[
            \e (t)
            ,
            uA + (1-u)B
            \big]
            (A-B)
        \Big)
        =
        \tr
        \big(
            \e (t)
            [
            A , B
            ]
        \big),
    \end{equation*}
    where in the last equality we used the identity 
    $
    \tr([M_1,M_2]M_3)
    =
    \tr(M_1[M_2,M_3]).
    $
    Notice that $\dvp$ does not depend on $u$ and in particular it is again a Lipschitz function.
    So, if $\e_*$ is a point of a singular trajectory, it must satisfy the two following conditions:
    \begin{align}
    \label{SingEq1}
    \left\{
        \begin{array}{l}
            \tr
            \big(
                \e_*
                (
                    A-B
                )
            \big)=0,
            \\
            \tr
            \big(
                \e_*
                [
                A , B
                ]
            \big)=0
        \end{array}
        \right.
        .
    \end{align}
    These two conditions are linear in $\e_*$, so, since $A,B$ and $[A,B]$ are linearly independent, they determine a straight line in $\sld$.
    \\
    If we write 
    $
    \e_*
    =
    \al A + \be B + \gamma [A,B],
    $
    then the second equation implies $\gamma=0$. Hence, from \eqref{CoeffEta}, we see that if we take $c_*=\tr(\e_* A)$, the line $r_{c_*}$ is tangent to the hyperboloid $H$ (see discussion in Subsection 4.2). 
    \\[5pt]
    There are two intersection between the line \eqref{SingEq1} and the hyperboloid $H$ described in Section 2 if and only if 
    \begin{equation}
    \label{SingIntersezIneq}
        \big(
            2\tr(AB)
            -
            \tr(A^2)
            -
            \tr(B^2)
        \big)
        \tr([A,B]^2)
        >
        0.
    \end{equation}
    If there are no intersection, then there are no singular extremals. So, from now on we will assume that this inequality hold.
    \\
    We can compute explicitly these two intersection: let us call $\e_*$ one of  the two intersection point. Then, from \eqref{swCoord} and $\gamma=0$ we obtain
    \begin{equation*}
        \e_*
        =
        \al_*
        A
        +
        \al_*
        \frac{
            \tr(AB)-\tr(A^2)
        }{
            \tr(AB)-\tr(B^2)
        }
        B,
    \end{equation*}
    where $\al_*\in\R$ is chosen such that $\tr(\e_* ^2)=1$. 
    Since the intersection between \eqref{SingEq1} and $H$ is a discrete set and since the solution $\e_*$ is Lipschitz, the only possibility for $\e_*$ is to be constant.  
    Hence
    \begin{equation*}
        0
        =
        \de_*(t)
        =
        [
            \e_*
            ,
            u_* A
            +
            (1-u_*)B
        ],
    \end{equation*}
    from which we can determine the value of the singular control $u_*$
    \begin{equation}
        u_*
        =
        \frac{
            \tr(AB)-\tr(B^2)
        }{
            2\tr(AB)-\tr(A^2)-\tr(B^2)
        }.
    \end{equation}
    In order to be an admissible control, it must hold $u_*\in[0,1]$.
    The inequality $u_*\geq 0$ is equivalent to the condition
    \begin{equation*}
        \left\{
            \tr(AB)
            >
            \frac{\tr(A^2)+\tr(B^2)}{2}
        \right\}
        \cup 
        \left\{
            \tr(AB)
            \leq
            \tr(B^2)
        \right\},
    \end{equation*}
    while the other inequality $u_*\leq 1$ is equivalent to
    \begin{equation*}
        \left\{
            \tr(AB) 
            \geq
            \tr(A^2)
        \right\}
        \cup 
        \left\{
            \tr(AB)
            <
            \frac{\tr(A^2)+\tr(B^2)}{2}
        \right\},
    \end{equation*}
    and recalling that $\tr(A^2)>\tr(B^2)$, we obtain 
    \begin{prop}
    $u_*$ is an admissible control if and only if 
    \begin{equation*}
            \tr(AB) 
            \geq
            \tr(A^2)
            \quad
        \text{ or }
        \quad 
            \tr(AB)
            \leq
            \tr(B^2).  
    \end{equation*}
    \end{prop}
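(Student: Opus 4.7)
The preceding discussion has already reduced the statement to an elementary one-variable inequality. Setting $a=\tr(A^2)$, $b=\tr(B^2)$, $c=\tr(AB)$, the paragraphs just above show that $u_*\ge 0$ holds exactly when $c>\frac{a+b}{2}$ or $c\le b$, and that $u_*\le 1$ holds exactly when $c\ge a$ or $c<\frac{a+b}{2}$. Thus my plan is simply to intersect these two unions and verify that the result is $\{c\le b\}\cup\{c\ge a\}$, using the hypothesis $a\ge b$.

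My approach is to introduce the midpoint $m=\frac{a+b}{2}$ and observe that $a\ge b$ yields the ordering $b\le m\le a$. A short case analysis on where $c$ sits relative to $b,m,a$ then finishes the job. In the tail $c\ge a$ one has $c\ge a\ge m$, so the first union is satisfied via $c>m$ (the degenerate case $a=b=m=c$ aside) and the second is satisfied trivially. In the tail $c\le b$ one has $c\le b\le m$, which gives the second union via $c<m$ and the first trivially. On the open interval $b<c<a$, however, the first union forces $c>m$ while the second forces $c<m$, a contradiction. Hence $u_*\in[0,1]$ precisely off the open interval $(b,a)$, which is the claimed equivalence.

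The only delicate point is that the denominator $2c-a-b$ vanishes at $c=m$, so $u_*$ is formally undefined there; however $c=m$ lies in the excluded open interval $(b,a)$ whenever $a>b$ and is thus automatically outside the admissible region, while in the fully degenerate case $a=b$ the stated condition becomes vacuously true and one checks directly (or by a limit in $(c-b)/(2c-a-b)$) that nothing is lost. Beyond this boundary bookkeeping there is no real obstacle: the whole argument is a short sign chase using only $a\ge b$.
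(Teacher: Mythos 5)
Your proof is correct and follows essentially the same route as the paper: the paper also obtains the two sign conditions $u_*\ge 0 \Leftrightarrow \{c>\frac{a+b}{2}\}\cup\{c\le b\}$ and $u_*\le 1 \Leftrightarrow \{c\ge a\}\cup\{c<\frac{a+b}{2}\}$ and then intersects them using $\tr(A^2)\ge\tr(B^2)$, merely stating the outcome where you spell out the case analysis on $b\le\frac{a+b}{2}\le a$. Your extra bookkeeping at $c=\frac{a+b}{2}$ and in the degenerate case $a=b$ is harmless and in fact slightly more careful than the paper, which excludes these situations implicitly via the strict inequality and the nondegeneracy condition on $\tr([A,B]^2)\big(2\tr(AB)-\tr(A^2)-\tr(B^2)\big)$.
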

    Then, we obtain
    \begin{equation}
        \label{SingVel}
        u_*A
        +
        (1-u_*)
        B
        =
        \frac{
                \tr(AB)-\tr(B^2)
            }
            {
                2\tr(AB)-\tr(A^2)-\tr(B^2)
            }
        A
        +
        \frac{
                \tr(AB)-\tr(A^2)
            }
            {
                2\tr(AB)-\tr(A^2)-\tr(B^2)
            }
        B,
    \end{equation}
    and
    \begin{equation}
    \label{trSing}
        \tr
        \Big(
            \big(
                u_*
                A
                +
                (1-u_*)
                B
            \big)^2        
        \Big)
        =
        \frac{1}{2}
        \frac{
            \tr
            \big(
                [A,B]^2
            \big)
        }{
            2\tr(AB)
            -\tr(A^2)
            -\tr(B^2)   
        }.
    \end{equation}
    So, if we take the constant control equal to $u_*$, then the associated Lyapunov exponent is 
    \begin{equation}
    \label{SingLyap}
        l
        =
        \frac{1}{2}        
        \sqrt{
                \frac{
                \tr
                \big(
                    [A,B]^2
                \big)
            }{
                2\tr(AB)
                -\tr(A^2)
                -\tr(B^2)
            }
        }.
    \end{equation}
    A direct computation shows that
    \begin{equation}
        \tr
        \Big(
            \big(
                u_*
                A
                +
                (1-u_*)
                B
            \big)^2        
        \Big)
        \geq
        \tr(A^2)
        \quad 
        \text{iff}
        \quad 
        \tr(AB)
        \geq
        \frac{\tr(A^2)+\tr(B^2)}{2}.
    \end{equation}
    So, taking into account that singular control is admissible if $\tr(AB)\geq \tr(A^2)$, we obtain that singular constant control are better than constant control if $\tr(AB)\geq \tr(A^2)$. 
    On the other hand, if
    $\tr(AB)\leq \tr(B^2)$
    then constant control is better than constant singular control. 
    \\
    The following result ends the discussion about the singular case.
    \begin{prop}
        There are no solution to the adjoint system \eqref{adj_syst} containing a \emph{bang-bang} piece and a singular piece.
    \end{prop}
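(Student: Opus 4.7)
The plan is to argue by contradiction: suppose there is an extremal whose control is bang-bang (say $u\equiv 1$) on an interval $[t_0,t_1]$ and singular on an adjacent interval $[t_1,t_2]$; the reverse ordering and the $u\equiv 0$ case are analogous. On the singular piece $\e\equiv\e_*$, so continuity of the Lipschitz function $\e$ forces $\e(t_1)=\e_*$, fixing a sign between the two singular points $\pm\e_*$.

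First I would study the switching function $\varphi(t)=\tr(\e(t)(A-B))$ near $t_1$. The identity $[uA+(1-u)B,A-B]=-[A,B]$ shows that $\dot\varphi(t)=-\tr(\e(t)[A,B])$ is independent of $u$, hence continuous across the junction and vanishing at $t_1$ since $\e_*\in\mathrm{span}(A,B)=\{M\in\sld:\tr(M[A,B])=0\}$. Thus $\varphi$ has a zero of order at least two at $t_1$. Writing $\e=\alpha A+\beta B+\gamma[A,B]$ and using the identities $\tr(A[A,[A,B]])=\tr([A,B][A,[A,B]])=0$ and $\tr(B[A,[A,B]])=-\tr([A,B]^2)$ (together with their $A\leftrightarrow B$ analogues), a short computation yields $\ddot\varphi=[u\beta-(1-u)\alpha]\tr([A,B]^2)$ on any bang-bang piece; at $t_1^-$ this becomes $\beta_*\tr([A,B]^2)$ in the $u\equiv 1$ case, while $\ddot\varphi\equiv 0$ on the singular side. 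PMP's requirement $\varphi>0$ on the bang-bang piece then forces the local sign condition $\beta_*\tr([A,B]^2)\ge 0$.

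To upgrade this into a contradiction, I would invoke the transversality-enforced periodicity of $\e$ (Proposition~\ref{TransvImplPeriodic}): the presence of a singular subinterval forces $\e(0)=\e(T)=\e_*$, so the bang-bang arc is a closed orbit of $\dot\e=[\e,A]$ through $\e_*$, i.e.\ $\Ad(e^{sA})\e_*=\e_*$ for some $s>0$. Since $\e_*\not\propto A$ (because $\e_*=\alpha_*A+\beta_*B$ with $\beta_*\ne 0$), this forces $e^{sA}=\pm\I$. In the real eigenvalue case $\tr(A^2)>0$ one has $\tr(e^{sA})=2\cosh(s\sqrt{\tr(A^2)/2})\ge 2$ for all $s$, ruling out $e^{sA}=\pm\I$ for $s>0$ and yielding an immediate contradiction. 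In the imaginary eigenvalue case $\tr(A^2)<0$, the equation $e^{sA}=\pm\I$ is solvable at $s\in\{\pi/\la,2\pi/\la\}$ with $\la=\sqrt{-\tr(A^2)/2}$, and a direct evaluation of $\varphi$ along the closed adjoint orbit of $\e_*$ (using the explicit form of $\Ad(e^{sA})\e_*$ and the sign constraint $\beta_*\tr([A,B]^2)\ge 0$) is required to show that $\varphi$ cannot remain strictly positive on the whole open interval $(0,s)$, again contradicting PMP.

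The main obstacle is precisely this imaginary-eigenvalue subcase: the local second-order analysis at the junction is automatically satisfiable (swapping $\e_*\leftrightarrow-\e_*$ flips the sign of $\beta_*$), so the contradiction genuinely requires a global argument combining periodicity with an explicit sign analysis of $\varphi$ over a full period of the closed adjoint orbit. The real-eigenvalue case, by contrast, falls out immediately from the centralizer computation once periodicity has been invoked.
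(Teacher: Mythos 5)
Your local junction analysis (continuity of $\e$ forcing $\e(t_1)=\e_*$, the computation of $\dot\vp$ and of $\ddot\vp$ on a bang arc, the sign condition $\beta_*\tr([A,B]^2)\ge 0$) is correct, and it is a genuinely different starting point from the paper, which argues instead via the phase portrait on the hyperboloid: the bang-bang periodic adjoint orbits shrink to $\e_*$ as the switching time tends to zero, so any trajectory escaping from $\e_*$ would have to cross one of these closed orbits, contradicting uniqueness of solutions (Figure \ref{RitrFaseSing}). However, your global step has a real gap. Periodicity of $\e$ (Proposition \ref{TransvImplPeriodic}) gives $\e(0)=\e(T)$, not $\e(0)=\e(T)=\e_*$: the singular subarc may sit strictly inside $[0,T]$, flanked by bang arcs that may use \emph{both} controls, in which case closing up the period yields a mixed condition of the type $\Ad\big(e^{s_1A}e^{s_2B}\cdots\big)\e_*=\e_*$ rather than $\Ad(e^{sA})\e_*=\e_*$. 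To rescue the reduction you would need the structural fact (which you do not state) that the $u=1$ adjoint orbit through $\e_*$, namely $\{\tr(MA)=c_*\}\cap H$, meets the switching curve only at $\e_*$ itself --- because for the singular level $c_*$ the discriminant in \eqref{CoeffEta} vanishes and the line $r_{c_*}$ is tangent to $H$ --- so that a bang arc adjacent to the singular arc can never switch to the other control; only with this in hand does your centralizer argument ($e^{sA}$ commuting with $\e_*\not\propto A$ forces $e^{sA}=\pm\I$, impossible for $\tr(A^2)>0$) dispose of the real-eigenvalue case.

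The more serious problem is the imaginary-eigenvalue case, which you leave open and for which your intended contradiction cannot materialize. The same tangency fact shows that along the full $u=1$ loop through $\e_*$ (of duration $\pi/\la$, where indeed $e^{(\pi/\la)A}=-\I$) the switching function $\vp$ vanishes only at $\e_*$ and hence keeps a constant sign; replacing $\e_*$ by $-\e_*$ if necessary, that sign is exactly the one compatible with $u=1$. So the ``direct evaluation of $\vp$ along the closed adjoint orbit'' you propose would show that $\vp$ \emph{does} remain positive on $(0,s)$, and the pointwise conditions of PMP (maximality plus the transversality-induced periodicity \eqref{transv}) do not by themselves exclude a concatenation of the form singular--loop--singular. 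Excluding it requires a genuinely global ingredient, which is precisely what the paper's proof supplies: the family of bang-bang periodic adjoint trajectories accumulating on $\e_*$, together with the fact that two distinct integral curves of the same vector field $[\,\cdot\,,A]$ on $H$ cannot meet, prevents any extremal from leaving $\e_*$ at all. As it stands, your proposal proves the statement only in the real-eigenvalue case and under an unjustified normal form for the extremal; the case you yourself flag as the main obstacle is where the argument breaks down.
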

    \begin{proof}
        We already noticed that for $\e_*$ we have $\gamma=0$ in \eqref{CoeffEta}
        and the intersection line between the planes $\Pi_{A,c_*}$ 
        and $\Pi_{B,c_*}$ is tangent to the hyperboloid $H$, where $c_*=\tr(A\e_*)$. 
        Recall from Section 4 that as the switching time in a \textit{bang-bang} trajectory in $\SLD$ tends to zero, the resulting trajectory tends to a singular trajectory. The convergence in $\SLD$ implies that also the sequence of the corresponding adjoint trajectories tends to the singular adjoint trajectory, which is constant. Thus, in a neighborhood of $\e_*$ sufficiently small and contained in the hyperboloid $H$, the phase portrait is as in Figure \ref{RitrFaseSing}.
        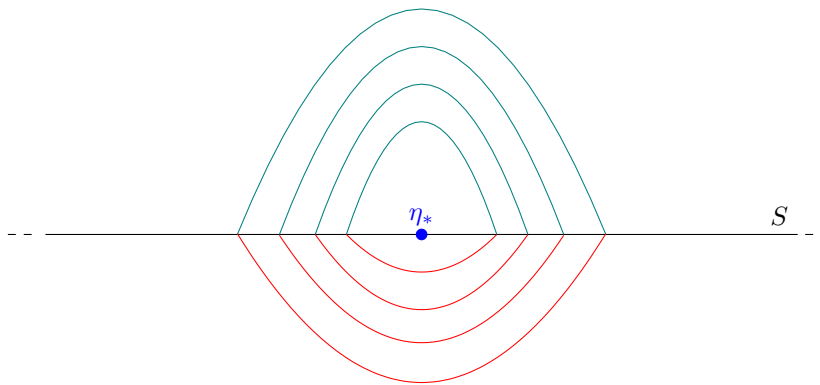
\begin{figure}[ht]
            \centering
            \begin{tikzpicture}
    \draw[-] (-5,0) -- (5,0) 
    node[anchor=south east] {$S$}
    ;
    \draw[dashed] (5.1,0) -- (5.5,0);
    \draw[dashed] (-5.5,0) -- (-5.1,0);
    \filldraw[blue] (0,0) circle (2pt) node[anchor=south]{$\e_*$};
    \draw[color=teal,domain=-1:1] plot (\x,{-1.5*(\x)^2+1.5}) ;
    \draw[color=teal,domain=-1.414:1.414] plot (\x,{-(\x)^2+2}) ;
    \draw[color=teal,domain=-1.89:1.89] plot (\x,{-0.7*(\x)^2+2.5}) ;
    \draw[color=teal,domain=-2.45:2.45] plot (\x,{-0.5*(\x)^2+3}) ;
    \draw[color=red,domain=-1:1,smooth] plot (\x, {0.5*(\x)^2-0.5}) ;
    \draw[color=red,domain=-1.414:1.414,smooth] plot (\x, {0.5*(\x)^2-1}) ;
    \draw[color=red,domain=-1.89:1.89,smooth] plot (\x,{0.4*(\x)^2-1.44}) ;
    \draw[color=red,domain=-2.45:2.45,smooth] plot (\x,{0.33*(\x)^2-1.97}) ;
\end{tikzpicture}
            \caption{Phase portrait on the hyperboloid $H$ for the adjoint system \eqref{adj_syst} near $\e_*$. The black line $S$ is the switching curve, $\e_*$ is the singular point. Above and below $S$ we have the two \textit{bang} pieces, and as the switching time goes to zero the adjoint trajectory tends to $\e_*$. }
            \label{RitrFaseSing}
        \end{figure}
        \\
        Thus, any possible trajectory starting from $\e_*$ and escaping the switching curve $S$ would inevitably cross some of the periodic trajectories tending to $\e_*$, contradicting the local uniqueness of the solution of an ODE. 
    \end{proof}
    So, to resume, we found that if an extremal contains a singular piece, then the whole extremal is singular, and the corresponding Lyapunov exponent is 
    \begin{equation}
        \ell_{\text{sing}}(A,B)
        =
        \frac{1}{2}        
        \sqrt{
                \frac{
                \tr
                \big(
                    [A,B]^2
                \big)
            }{
                2\tr(AB)
                -\tr(A^2)
                -\tr(B^2)
            }
        },
    \end{equation}
    and this is the optimal solution if $\tr(AB)\geq \tr(A^2)$.
\end{section}

\begin{section}{Discussion of cases}
    We end up with some final remarks about which is the optimal strategy depending on the values $\tr(A^2)$,$\tr(B^2)$,$\tr(AB)$. As before, we always assume $\tr(A^2)\geq \tr(B^2)$. 
    \\
    So far, we have obtained that all possible extremals satisfying PMP are in one of the following form:
    \begin{itemize}
        \item constant control;
        \item constant singular control;
        \item if $\tr(A^2)<0$, \emph{bang-bang} periodic control as in Subsection 4.4.1.
    \end{itemize}
    As pointed out in Subsection 4.4.2, there is also this other kind of extremals accumulating at singular extremals. However, we saw that their associated principal Lyapunov exponent is at most strictly less then the one associated to singular extremal, so we can omit them from the discussion.  
    \\
    First, assume that $\tr(A^2)\geq 0$. In this case we already saw in Section 5 that if $\tr(AB) \geq \tr(A^2)$, then the value of the Principal Lyapunov exponent given by the constant singular control is bigger than the one given by the constant control. So, the first two point of the Main Theorem are proved. 
    \\
    We turn now to the $\tr(A^2)<0$ case. We have the following restriction on the possible values of $\tr(AB)$.
    \begin{lem}
        If 
        $
        \tr(B^2)
        \leq
        \tr(A^2)
        <
        0
        $, 
        then 
        $
        |\tr(AB)|
        \geq
        \sqrt{
           \tr(A^2)\tr(B^2)
        }
        $. 
        In particular 
        $
        \tr([A,B]^2)
        \geq 
        0
        $. 
    \end{lem}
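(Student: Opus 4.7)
The key observation is that the bilinear form $(M_1, M_2) \mapsto \tr(M_1 M_2)$ on $\sld$ is a nondegenerate symmetric form of signature $(+,+,-)$: in the coordinates $M = \begin{pmatrix} m_1 & m_2 \\ m_3 & -m_1 \end{pmatrix}$ we have $\tr(M^2) = 2m_1^2 + \frac{1}{2}(m_2+m_3)^2 - \frac{1}{2}(m_2 - m_3)^2$. Hence $\tr(A^2)<0$ means $A$ is a ``timelike'' vector for this Lorentzian form, and the target inequality $\tr(AB)^2 \geq \tr(A^2)\tr(B^2)$ is exactly the reverse Cauchy--Schwarz inequality for two timelike vectors in Minkowski space.

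The cleanest way to turn this into a proof is to make the Lorentzian structure explicit by choosing coordinates adapted to $A$. Since $\tr(A^2)<0$, the matrix $A$ has conjugate imaginary eigenvalues $\pm i\alpha$ with $\alpha>0$, so after conjugation by some element of $\mathrm{GL}_2(\R)$ we may assume
\begin{equation*}
    A = \alpha\begin{pmatrix} 0 & -1 \\ 1 & 0 \end{pmatrix}, \qquad
    B = \begin{pmatrix} b_1 & b_2 \\ b_3 & -b_1 \end{pmatrix}.
\end{equation*}
This conjugation does not change any of the three scalars $\tr(A^2),\tr(B^2),\tr(AB)$, so the inequality is invariant under this reduction.

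Once in this normal form, I would directly compute the three traces: $\tr(A^2) = -2\alpha^2$, $\tr(B^2) = 2b_1^2 + 2b_2b_3$, and $\tr(AB) = \alpha(b_2-b_3)$. Then a one-line algebraic check shows
\begin{equation*}
    \tr(AB)^2 - \tr(A^2)\tr(B^2)
    = \alpha^2\bigl((b_2+b_3)^2 + 4b_1^2\bigr) \geq 0,
\end{equation*}
which gives $\tr(AB)^2 \geq \tr(A^2)\tr(B^2)$. Since we are in the regime where $\tr(A^2),\tr(B^2) < 0$, the product $\tr(A^2)\tr(B^2)$ is strictly positive, so taking square roots yields $|\tr(AB)| \geq \sqrt{\tr(A^2)\tr(B^2)}$, the first claim.

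The ``in particular'' statement then follows immediately from Lemma~\ref{TrBracket}, since $\tr([A,B]^2) = 2\tr(AB)^2 - 2\tr(A^2)\tr(B^2) \geq 0$. There is no real obstacle here: the only subtle point is recognizing that the hypothesis $\tr(B^2)\leq\tr(A^2)$ is not actually used for the inequality itself (it is only a standing assumption of the paper), and that conjugating $A$ into a skew-symmetric form is allowed because it preserves all the relevant traces.
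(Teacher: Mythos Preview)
Your proof is correct and, if anything, slightly slicker than the paper's. The paper normalizes \emph{both} $A$ and $B$: it puts $A$ in the skew form $\begin{pmatrix}0&-\la\\ \la&0\end{pmatrix}$ and writes $B=P\begin{pmatrix}0&-\mu\\ \mu&0\end{pmatrix}P^{-1}$, then uses the residual rotational symmetry of $A$ to force $P$ to be upper triangular; the inequality finally comes from the AM--GM bound $\al^2+\al^{-2}\ge 2$. You instead normalize only $A$, leave $B$ arbitrary in $\sld$, and verify directly that $\tr(AB)^2-\tr(A^2)\tr(B^2)$ is a sum of real squares---this is exactly the reverse Cauchy--Schwarz inequality for the Lorentzian form $\tr(M_1M_2)$, as you say. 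Your route avoids the second normalization and the AM--GM step, and it makes transparent that the inequality $\tr(AB)^2\ge\tr(A^2)\tr(B^2)$ only needs $\tr(A^2)<0$ (the hypothesis on $B$ is used solely to make the right-hand side nonnegative). The paper's parametrization, on the other hand, is convenient for the remark that follows it, where the sign of $\tr(AB)$ is read off from the sign of $\la\mu$ and interpreted as $A,B$ lying in the same or opposite components of the timelike cone.
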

    \begin{proof}
        Up to a change of base, we can suppose
        \begin{equation*}
            A
            =
            \begin{pmatrix}
                0 & -\la \\
                \la & 0
            \end{pmatrix}
            ,
            \quad
            B
            =
            P
            \begin{pmatrix}
             0 & -\mu \\
             \mu & 0
            \end{pmatrix}
            P^{-1},
            \quad 
            P
            =
            \begin{pmatrix}
                a & b \\
                c & d
            \end{pmatrix}\in \SLD,
            \quad 
            \la,\mu\in\R.
        \end{equation*}
        Since $A$ in this form commutes with rotations, that is matrix of the form
        \begin{equation*}
            R_\theta
            =
            \begin{pmatrix}
                \cos\theta & -\sin\theta \\
                \sin\theta & \cos\theta
            \end{pmatrix}
            ,
            \quad 
            \theta\in[0,2\pi],
        \end{equation*}
        we can use another change of base given by a rotation matrix to kill one element of $P$. That is, we can further assume that $P$ is in the form
        \begin{equation*}
            P
            =
            \begin{pmatrix}
                \al & \be \\
                0 & \frac{1}{\al}
            \end{pmatrix}
            ,
            \quad 
            \al,\be\in\R,
            \quad
            \al\neq 0.
        \end{equation*}
        With $A,B$ in this form, we can compute directly $\tr(AB):$
        \begin{equation*}
            \tr(AB)
            =
            -\la\mu
            \left(
                \frac{1}{\al^2}
                +
                \al^2
                +
                \be^2
            \right).
        \end{equation*}
        Since 
        $
            \frac{1}{\al^2}
            +
            \al^2
            \geq 
            2
        $
        ,
        we obtain 
        \begin{equation*}
            |\tr(AB)|
            \geq 
            2|\la\mu|
            =
            \sqrt{
                \tr(A^2)\tr(B^2)
            }.
        \end{equation*}
    \end{proof}

    \begin{rem}
        From the previous proof, we can give a geometric interpretation of the sign of $\tr(AB)$ in the case $\tr(B^2)\leq \tr(A^2)<0$. 
        Define the cone of imaginary matrices in $\sld$:
        \begin{equation*}
            \mathcal{C}
            =
            \{
                M\in\sld 
                \: | \:
                \tr(M^2)<0
            \}.
        \end{equation*}
        To see that $\mathcal{C}$ is a cone, one can reason as at the beginning of Subsection 3.1. Using the form for $A,B$ introduced in the previous proof, if $\tr(AB)<0$, then $\la\mu>0$, that is $A,B$ are in the same connected component. If instead $\tr(AB)>0$, then $\la\mu<0$, hence $A,B$ are in two different connected component.
    \end{rem}
    In particular, if $\tr(AB)<0$, then the whole segment of velocities 
    \begin{equation*}
        \{
            uA+(1-u)B
            \:|\:
            u\in[0,1]
        \}
    \end{equation*}
    is inside the cone $\mathcal{C}$. In this case, if the singular control is admissible, then it has imaginary eigenvalues, hence singular constant control cannot be optimal (the resulting trajectory in $\SLD$ would be periodic, hence bounded). So, in this case the best admissible strategy is the one given in Subsection 4.4.1, and we obtain the third point of Main Theorem.
    \\
    If instead $\tr(AB)>0$, then part of the segment of velocities is outside the cone, and in particular singular control is admissible since $\tr(A^2)<0$.
    \\
    The following Lemma shows that in this case constant singular control are again optimal.
    \begin{lem}
        If $\tr(B^2)\leq\tr(A^2)<0$ 
        and 
        $
            \tr(AB)
            \geq
            \sqrt{\tr(A^2)\tr(B^2)}
        $, 
        then 
            \begin{align*} 
                    \frac{2}{\pi}
                    \frac{
                        1
                    }{
                        \sqrt{\frac{-2}{\tr(A^2)}}
                        +
                        \sqrt{\frac{-2}{\tr(B^2)}}
                    }
                    \arcosh
                    \left(
                        \frac{\tr(AB)}{\sqrt{\tr(A^2)\tr(B^2)}}
                    \right)
                    \leq
                        \sqrt{
                            \frac{1}{2}
                            \frac{
                                    \tr (AB)^2-\tr(A^2)\tr(B^2)
                                }
                            {
                                 2\tr(AB)-\tr(A^2)-\tr(B^2)
                            }
                        }
                    .
            \end{align*}
    \end{lem}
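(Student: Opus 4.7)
The plan is to simplify everything by passing to the natural parameters of the imaginary eigenvalues. Set $\la=\sqrt{-\tr(A^2)/2}>0$, $\mu=\sqrt{-\tr(B^2)/2}>0$, so that $\sqrt{\tr(A^2)\tr(B^2)}=2\la\mu$, $\sqrt{-2/\tr(A^2)}=1/\la$ and $\sqrt{-2/\tr(B^2)}=1/\mu$. By hypothesis $\gamma\doteq\tr(AB)/(2\la\mu)\geq 1$, and a direct computation gives
\begin{equation*}
    \tr(AB)^2-\tr(A^2)\tr(B^2)=4\la^2\mu^2(\gamma^2-1),\qquad 2\tr(AB)-\tr(A^2)-\tr(B^2)=(\la+\mu)^2+2\la\mu(\gamma-1)+(\la+\mu)^2-(\la+\mu)^2
\end{equation*}
so that $2\tr(AB)-\tr(A^2)-\tr(B^2)=(\la+\mu)^2+2\la\mu(\gamma-1)\cdot 2$ after collecting, more precisely $4\la\mu\gamma+2\la^2+2\mu^2=2\bigl[(\la+\mu)^2+2\la\mu(\gamma-1)\bigr]$. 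Substituting, the claimed inequality becomes, after cancelling the common factor $\la\mu>0$,
\begin{equation*}
    \frac{2}{\pi}\,\frac{\arcosh(\gamma)}{\la+\mu}\;\leq\;\frac{\sqrt{\gamma^2-1}}{\sqrt{(\la+\mu)^2+2\la\mu(\gamma-1)}}.
\end{equation*}

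Next I would change variable to $y=\arcosh(\gamma)\geq 0$, so that $\sqrt{\gamma^2-1}=\sinh y$ and $\gamma-1=2\sinh^2(y/2)$. Introducing $k=\dfrac{2\sqrt{\la\mu}}{\la+\mu}\in(0,1]$ (by AM-GM), the inequality takes the clean form
\begin{equation*}
    \sinh(y)\;\geq\;\frac{2y}{\pi}\sqrt{1+k^2\sinh^2(y/2)}.
\end{equation*}
The right-hand side is monotone increasing in $k$, so it suffices to prove it for the extremal value $k=1$ (which is exactly the equi-spectral case $\la=\mu$).

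For $k=1$ the identity $1+\sinh^2(y/2)=\cosh^2(y/2)$ together with the double-angle formula $\sinh(y)=2\sinh(y/2)\cosh(y/2)$ reduces the inequality, after dividing by $\cosh(y/2)>0$, to
\begin{equation*}
    \sinh(y/2)\;\geq\;\frac{y}{\pi},
\end{equation*}
which is immediate from $\sinh(x)\geq x$ for $x\geq 0$ and the trivial bound $y/2\geq y/\pi$ coming from $\pi>2$. This completes the reduction.

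The only step that is not purely mechanical is the monotonicity-in-$k$ reduction, which is what makes the AM-GM comparison $4\la\mu\leq(\la+\mu)^2$ play its role; everything else is algebraic rewriting plus one elementary transcendental inequality. I expect the most error-prone part to be keeping track of the factor of $2$'s during the initial rewriting of $2\tr(AB)-\tr(A^2)-\tr(B^2)$ in terms of $\la,\mu,\gamma$, since a miscalculation there would derail the clean appearance of $k=2\sqrt{\la\mu}/(\la+\mu)$ and obscure the monotonicity argument.
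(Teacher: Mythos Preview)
Your argument is correct. After the substitutions $\la=\sqrt{-\tr(A^2)/2}$, $\mu=\sqrt{-\tr(B^2)/2}$, $\gamma=\tr(AB)/(2\la\mu)$ one indeed obtains
\[
2\tr(AB)-\tr(A^2)-\tr(B^2)=2\bigl[(\la+\mu)^2+2\la\mu(\gamma-1)\bigr],
\]
so the inequality reduces exactly to $\sinh y\geq \tfrac{2y}{\pi}\sqrt{1+k^2\sinh^2(y/2)}$ with $k=2\sqrt{\la\mu}/(\la+\mu)\in(0,1]$; the monotonicity in $k$, the reduction to $k=1$, and the final bound $\sinh(y/2)\geq y/2\geq y/\pi$ are all valid. (Your first displayed line is garbled, but you immediately correct it, and the corrected expression is the right one.)

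The paper proves the same inequality by a rather different route. It normalizes by $\tr(A^2)$, setting $b=\tr(B^2)/\tr(A^2)\geq 1$ and $c=\tr(AB)/\tr(A^2)\leq -\sqrt{b}$, and compares the two sides by differentiating in $c$: both sides vanish at $c=-\sqrt{b}$, and the derivative comparison reduces to the rational inequality $\tfrac{2}{\pi}\tfrac{\sqrt b}{\sqrt b+1}\leq \tfrac{(1-c)(b-c)}{(b+1-2c)^{3/2}}$, which is then checked at the boundary $c=-\sqrt{b}$ by monotonicity of the right-hand side. Your approach avoids differentiation entirely: the substitution $y=\arcosh\gamma$ linearizes the transcendental part, and the AM--GM parameter $k$ isolates exactly where the slack between $\la$ and $\mu$ enters, so the whole inequality collapses to $\sinh(y/2)\geq y/\pi$ via the double-angle formula. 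This is cleaner and makes the two sources of looseness ($\pi>2$ and $4\la\mu\leq(\la+\mu)^2$) completely explicit; the paper's method is more mechanical but less illuminating about why the inequality holds with room to spare.
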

    \begin{proof}
        First, notice that the two quantities are homogeneus in $\tr(A^2),\tr(B^2),\tr(AB)$. So, left hand side can be rewritten as
        \begin{equation*}
            \frac{2}{\pi}
            a
            \frac{\sqrt{b}}{1+\sqrt{b}}
            \arcosh
            \left(
                \frac{-c}{\sqrt{b}}
            \right),
        \end{equation*}
        where $a=\sqrt{\frac{\tr(A^2)}{-2}}$, $b=\frac{\tr(B^2)}{\tr(A^2)}$, $c=\frac{\tr(AB)}{\tr(A^2)}$. From the hypothesis of the Lemma, we deduce $a>0$, $b\geq 1$, $c\leq-\sqrt{b}\leq -1$. Using similar notation, the right hand side of the inequality in the thesis is 
        \begin{equation*}
            a 
            \sqrt{
                \frac{
                    c^2 - b 
                }{
                    b+1-2c
                }
            }.
        \end{equation*}
        Define
        \begin{align*}
            f(b,c)
            &=
            \frac{2}{\pi}
            \frac{\sqrt{b}}{1+\sqrt{b}}
            \arcosh
            \left(
                \frac{-c}{\sqrt{b}}
            \right),
            \\
            g(b,c)
            &=
            \sqrt{
                \frac{
                    c^2 - b 
                }{
                    b+1-2c
                }
            }.
        \end{align*}
    Notice that for every $b$, if $c\to -\sqrt{b}$ then both functions tend to zero, i.e.:
    \begin{equation*}
        \forall b \geq 1
        \quad 
        \lim_{c\to -\sqrt{b}}
        f(b,c)
        =
        0
        =
        \lim_{c\to -\sqrt{b}}
        g(b,c).
    \end{equation*}
    Hence, if for every $b\geq 1$ and every $c\leq -\sqrt{b}$ we have 
    \begin{equation}
        \label{ineqineq}
        \frac{\pa f}{\pa c} (b,c)
        \geq
        \frac{\pa g}{\pa c} (b,c),
    \end{equation}
    then we obtain $f(b,c)\leq g(b,c)$ for every $b\geq 1$ and $c\leq -\sqrt{b}$.
    These derivatives are 
    \begin{align*}
        \frac{\pa g}{\pa c} (b,c)
        &=
        -\frac{2}{\pi}
        \frac{\sqrt{b}}{\sqrt{b}+1}
        \frac{1}{\sqrt{c^2-b}}
        \\
        \frac{\pa g}{\pa c} (b,c)
        &=
        \frac{1}{\sqrt{c^2-b}}
        \frac{
            (c-1)(b-c)
        }{
            (b+1-2c)^{\frac{3}{2}}
        }.
    \end{align*}
    So \eqref{ineqineq} reduces to
    \begin{equation*}
        \frac{2}{\pi}
        \frac{\sqrt{b}}{\sqrt{b}+1}
        \leq 
         \frac{
            (1-c)(b-c)
        }{
            (b+1-2c)^{\frac{3}{2}}
        }.
    \end{equation*}
    Since we want this inequality to hold for every $c\leq -\sqrt{b}$ and since the right hand side is decreasing in $c$, it is sufficient to prove
    \begin{equation*}
        \frac{2}{\pi}
        \frac{\sqrt{b}}{\sqrt{b}+1}
        \leq 
         \frac{
            (1+\sqrt{b})(b+\sqrt{b})
        }{
            (b+1+2\sqrt{b})^{\frac{3}{2}}
        }.
    \end{equation*}
    After a few simplifications, this reduces to 
    \begin{equation*}
        (\sqrt{b}+1)^{\frac{3}{2}}
        \geq
        \frac{2}{\pi},
    \end{equation*}
    which is true for every $b\geq 1$. 
    \end{proof}
    This concludes the proof of the fourth case of Main Theorem.
\end{section}

\appendix

\begin{section}{Pontryagin Maximum Principle}
    In this paper we used a version of Pontryagin Maximum Principle (PMP) which is slightly different from the one that is usually written in books (see, for instance \cite{AgSa} or \cite{AgBaBo}).
    So, for completeness, we recall here the precise statement of the PMP that we used in Section 2.
    \\
    Let $M$ be a smooth manifold of dimension $n$, 
    $U\subset \R^m$, and $(f_u)_{u\in U}$, a family of smooth vector field on $M$. Suppose furthermore that 
    $
    M
    \times 
    \overline{U} 
    \ni(q,u)
    \mapsto 
    f_u(q)
    $ 
    and 
    $
    M
    \times 
    \overline{U} 
    \ni(q,u)
    \mapsto 
    \frac{
        \partial f_u
    }{
        \partial q
    }(q)
    $ 
    are continuous.
    \\ 
    Fix $T>0$ and define $\U=\{u:[0,T]\to U, u\in L_{\textit{loc}}^\infty([0,T],U) \}$ the space of admissible controls. Fix $q_0\in M$ and for all $u\in\U$ denote with $q_u : [0,T] \to M$ the solution to the Cauchy Problem 
    \begin{equation}
	\label{contr_syst}
	\begin{cases}
		\dot{q}_u(t)=f_{u(t)}(q_u(t))\\
		q_u(0)=q_0
	\end{cases}.    
    \end{equation}
    Existence and uniqueness of solution to \eqref{contr_syst} are guaranteed by Carathéodory Theorem, see, for instance \cite{BrePi}.
    \\
    Let $a: M \to \R$ be a smooth function. We define the cost functional 
    \begin{equation*}
	J(u)=a(q_u(T)), \quad u\in\U.    
    \end{equation*}
    Consider the following optimal control problem:
    \begin{prob}[Optimal control]
	Maximize $J$ among all admissible control $u$, i.e., find
        $\Tilde{u}\in\U$ 
        such that 
	\begin{equation}
		\label{opt_contr}
		\begin{cases}
				\dot{q}_{\Tilde{u}}(t)
                =
                f_{\Tilde{u}(t)}(q_{\Tilde{u}}(t)), 
                \quad 
                t\in[0,T]
                \\
			q_{\Tilde{u}}(0)
                =
                q_0 \\
				a(q_{\Tilde{u}}(T))=\displaystyle\max_{u\in\U}a(q_{u}(T))
		\end{cases}.    
	\end{equation}
	\end{prob}
    A solution $\tu\in\U$ to this problem is called \emph{optimal control} and the corresponding solution to \eqref{contr_syst} is called \emph{optimal trajectory}. 
    \begin{defn}[Attainable set]
				Fix $q_0\in M$. The set of attainable points from $q_0$ of the control system \eqref{contr_syst} at time $T>0$ from $q_0\in M$ is 
				\begin{equation*}
				    \A_{q_0}
                        = 
                        \{
                        q\in M 
                        \; | \; 
                        \exists 
                        u \in \U 
                        \text{ such that } 
                        q_u(T)=q
                        \}.
				\end{equation*} 
    \end{defn} 
    \begin{rem}
	Let $q_{\tu}$ be an optimal trajectory for the optimal control problem. 
        Then $a(q_{\tu}(T))\in \partial a(\A_{q_0})$.
    \end{rem}
    \begin{thm}[Pontryagin Maximum Principle]
	\label{PMP}
	Let $\Tilde{u}\in \U$ be a control such that the point $q_{\tu}(T)$ is a local maximum point for the functional $a$.
        Define $h_u(\la)=\langle \la, f_u(\pi(\la))\rangle$, for $u\in U$ and $\la\in T^*M$. Then, there exists a Lipschitzian curve $\la : [0,T] \to T^*M$ such that
	\begin{align}
            &\la(t) 
            \neq 
            0, 
            \\
		\label{hamilt_sistem}
		&\dot{\la}(t) 
            = 
            \Vec{h}_{\Tilde{u}(t)}(\la(t)), \\
		\label{maxim_cond}
		&h_{\Tilde{u}(t)}(\la(t))
            =
            \max _{u\in U} 
            h_u(\la_t),
	\end{align}
	for almost all $t\in [0,T]$, and moreover
	\begin{equation}
		\label{trans_condition}
		\la_T
            =
            \D_{q_{\tu}(T)} a.    
	\end{equation}
    \end{thm}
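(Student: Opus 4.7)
The plan is to adapt the classical needle-variation proof of PMP to the maximum convention and free-endpoint transversality used here. The argument has three stages: construct a convex first-order approximating cone $\mathcal{K}$ to the attainable set $\A_{q_0}$ at the optimal endpoint $q_{\tu}(T)$; use local maximality of $a$ to conclude that $d_{q_{\tu}(T)} a$ is non-positive on $\mathcal{K}$; and transport this covector backward along the Hamiltonian flow to obtain a curve $\la(t)$ satisfying the adjoint equation \eqref{hamilt_sistem} and the pointwise maximum condition \eqref{maxim_cond}.

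Stage 1. For each Lebesgue point $\tau\in(0,T)$ of $\tu$ and each $v\in U$, I would consider the needle variation $u_{\eps}(t)=v$ on $[\tau-\eps,\tau]$ and $u_{\eps}(t)=\tu(t)$ elsewhere. Writing $P_{t,s}$ for the flow of the non-autonomous field $f_{\tu(\cdot)}$ from time $s$ to time $t$, a direct integration of \eqref{contr_syst} together with the Lebesgue-point property yields
\[
q_{u_{\eps}}(T)=q_{\tu}(T)+\eps\,w_{\tau,v}+o(\eps),\qquad w_{\tau,v}:=(P_{T,\tau})_{*}\bigl(f_v(q_{\tu}(\tau))-f_{\tu(\tau)}(q_{\tu}(\tau))\bigr).
\]
Concatenating finitely many disjoint needles with positive weights then realizes arbitrary positive linear combinations of the $w_{\tau_i,v_i}$ as first-order increments of admissible endpoints; let $\mathcal{K}\subset T_{q_{\tu}(T)}M$ denote the closed convex cone they generate. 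Stage 2. If some $w\in\mathcal{K}$ satisfied $\langle d_{q_{\tu}(T)}a,w\rangle>0$, a Brouwer/open-mapping argument (simultaneously varying enough independent needles to produce an open family of perturbed endpoints in a half-space containing $w$) would give admissible endpoints where $a$ strictly exceeds $a(q_{\tu}(T))$, contradicting local maximality. Therefore $\langle d_{q_{\tu}(T)}a,w_{\tau,v}\rangle\le 0$ for every Lebesgue point $\tau$ and every $v\in U$.

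Stage 3. Set $\la(T):=d_{q_{\tu}(T)}a$, which is exactly \eqref{trans_condition}, and define $\la(t)$ as the pull-back of $\la(T)$ by the time-dependent Hamiltonian flow of $\vec h_{\tu(t)}$ on $T^{*}M$. By construction $\la$ is Lipschitz and solves \eqref{hamilt_sistem}, and since the symplectic flow on $T^{*}M$ covers the tangent flow $P_{T,\tau}$ on $M$, one has
\[
\langle \la(\tau),f_v(q_{\tu}(\tau))-f_{\tu(\tau)}(q_{\tu}(\tau))\rangle=\langle \la(T),w_{\tau,v}\rangle\le 0,
\]
for a.e.\ $\tau\in[0,T]$ and every $v\in U$, which is precisely $h_v(\la(\tau))\le h_{\tu(\tau)}(\la(\tau))$, giving \eqref{maxim_cond}. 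Non-vanishing of $\la$ follows because $\la(T)\ne 0$ propagates to all $t$ under the symplectic flow; in the pathological case $d_{q_{\tu}(T)}a=0$ an abnormal multiplier is needed, but for the trace cost on $\SLD$ used in Section 2 this case does not arise at the relevant extrema.

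The main obstacle is stage 1: one must justify rigorously that \emph{simultaneously} inserting $k$ independent needles produces an open family of admissible endpoints whose first-order image is a full-dimensional simplicial cone, so that the local maximum property of $a$ on $\A_{q_0}$ does translate into the hyperplane-separation statement of stage 2. All other steps are essentially functorial: the backward transport on $T^{*}M$ is the contravariant lift of the forward variational flow on $M$, and the maximum condition \eqref{maxim_cond} is the direct translation of the non-positivity inequality under this lift.
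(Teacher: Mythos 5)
The paper does not prove Theorem \ref{PMP} at all: it is stated ``for completeness'' and delegated to the literature (Theorem 12.3 in \cite{AgSa}), so there is no internal proof to compare against; your sketch is the standard needle-variation argument, and as such it is essentially sound. Two remarks, though. First, the step you single out as the ``main obstacle'' is not actually needed for this version of the theorem. Here the endpoint is free, the cost $a$ is smooth, and $q_{\tu}(T)$ is a \emph{local maximum of $a$ on the attainable set}, so a \emph{single} needle variation already gives the conclusion: $q_{u_{\eps}}(T)$ is attainable and converges to $q_{\tu}(T)$, hence $a(q_{u_{\eps}}(T))\le a(q_{\tu}(T))$ for small $\eps$, and expanding to first order yields $\langle \D_{q_{\tu}(T)}a,\,w_{\tau,v}\rangle\le 0$ directly. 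The convex cone of simultaneous needles, the Brouwer/open-mapping argument and the separation step are only required when there are terminal constraints or when one must produce an a priori unknown covector; in the free-endpoint problem the covector is prescribed by \eqref{trans_condition}, which is exactly why the paper can state the transversality condition as an equality $\la_T=\D_{q_{\tu}(T)}a$ rather than an inclusion. Second, two small points you gloss over: to pass from ``for each fixed $v\in U$ the inequality holds at a.e.\ Lebesgue point $\tau$'' to the maximum condition \eqref{maxim_cond} holding for a.e.\ $\tau$ simultaneously over all $u\in U$, you should use a countable dense subset of $U$ together with continuity of $u\mapsto h_u(\la)$ (this is where the standing continuity hypotheses on $(q,u)\mapsto f_u(q)$ enter); and the nontriviality $\la(t)\neq 0$ is not something you can prove --- it holds precisely when $\D_{q_{\tu}(T)}a\neq 0$, since the adjoint flow is linear and invertible, so the statement must be read with this proviso (in the application of Section 2 one has $\e(T)=X(T)-\tfrac{\tr X(T)}{2}\I\neq 0$ unless $X(T)=\pm\I$, so the abnormal case indeed plays no role). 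With these adjustments your outline matches the classical proof in \cite{AgSa} to which the paper appeals.
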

    We will call \eqref{hamilt_sistem} the Hamiltonian system associated to Problem \eqref{opt_contr}, \eqref{maxim_cond} the maximal condition and \eqref{trans_condition} transversal condition.
    \\
    We recall now some results that we used in Section 2 about Hamiltonian systems on Lie groups. For more detailed reference see Chapter 18 in \cite{AgSa} and Section 7.6 in \cite{AgBaBo}.
    \\
    Let $M\subset GL(N)$ be a Lie Group and $\mathcal{M}$ its Lie algebra.
    Given $q\in M$, define $L_q(p)=qp$, $p\in M$, the multiplication on the left by $q$.
    \\
    Take a Hamiltonian function $h\in C^\infty(T^*M)$ and define
    \begin{equation*}
        \mathcal{H}(\xi,q)
	=
	h(L_{q^{-1}}^*\xi,q), 
        \quad 
        \xi\in T_q^*M, 
        q\in M.
    \end{equation*}
    We will call $\Ham$ the trivialized Hamiltonian function of $h$. 
    We say that $h$ is left-invariant if $\mathcal{H}$ does not depend on $q$. 
    In this case, the function $\Ham$ can be seen simply as a smooth function on $\calM^*$, 
    hence its differential $\D\Ham$ can be seen as a function from $\calM^*$ in $(\calM ^*)^*=\calM$.
    \\
    Recall the definition of the adjoint representation: given $v\in\calM$, $\mathrm{ad} v (w)= [v,w]$, where $w\in\calM$.
    \begin{prop}
    \label{LieGroup1}
	Let $M$ be a Lie group and take $h\in C^\infty(T^*M)$. If $h$ is left-invariant, then the associated Hamiltonian system can be rewritten as
	\begin{equation}
		\label{HamSystLieG}
		\begin{cases}
		      \dot{q}=L_{q*}\D\mathcal{H}, \\
		      \dot{\xi}=(\mathrm{ad}\;\D\mathcal{H})^*\xi.
		\end{cases}
	\end{equation}
    \end{prop}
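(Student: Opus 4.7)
The plan is to pass to the left-trivialization $T^*M \cong M \times \calM^*$, in which a left-invariant $h$ descends to a function $\Ham$ on $\calM^*$ alone, and then rewrite the canonical Hamilton equations on $T^*M$ in these coordinates. Throughout I would use the finite-dimensional identifications $\calM^{**} \cong \calM$ and $T_\eta \calM^* \cong \calM^*$ (so that $\D\Ham(\eta) \in \calM$), together with the paper's conventions for $L_{q*}$ and its transpose.

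For the first equation, I would start from the canonical Hamilton equation $\dot q = \partial h/\partial \xi$ on $T^*M$. Left-invariance gives $h(\xi) = \Ham(L_{q^{-1}}^*\xi)$, and since this is the composition of $\Ham$ with a linear map in $\xi$, the chain rule yields $\partial h/\partial \xi = (L_{q^{-1}}^*)^{*} \D\Ham$; under the natural bidual identification this is exactly $L_{q*}\D\Ham \in T_q M$, which is the claimed first equation. For the second equation, I would differentiate the relation $\eta(t) = L_{q(t)^{-1}}^{*}\xi(t)$ in $t$, feeding in the already-derived $\dot q = L_{q*}\D\Ham$ and the canonical $\dot\xi = -\partial h/\partial q$. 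The key computation is that the differential of $q \mapsto L_{q^{-1}}^{*}$ in the direction $L_{q*}X$, paired against the momentum left-translated to $\calM^*$, produces precisely the infinitesimal coadjoint action $\ad^{*}_{X}\eta$; the $\partial h/\partial q$-term vanishes by left-invariance, leaving $\dot\eta = (\ad\,\D\Ham)^{*}\eta$. An equivalent and arguably tidier route is the Lie--Poisson reduction: check that the canonical Poisson bracket of two left-invariant functions $f,g \in C^\infty(T^*M)$ descends to $\{f,g\}(\eta) = \langle \eta,[\D f,\D g]\rangle$ on $\calM^*$, from which the Hamiltonian vector field of $\Ham$ can be read off directly as $\ad^{*}_{\D\Ham}\eta$.

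The main obstacle is bookkeeping rather than any deep difficulty: one has to keep straight the identifications between $T^*M$ and $M \times \calM^*$, between $L_{q*}$ and its transpose, and between the tangent space of $\calM^*$ and $\calM^*$ itself. Sign conventions are the trap to watch --- whether one obtains $+\ad^{*}$ or $-\ad^{*}$ depends on choosing the left versus the right trivialization; the paper's convention $\dot X = XA(t)$ in \eqref{Group_eq} is what places $+\ad^{*}$ rather than $-\ad^{*}$ on the right-hand side, and fixing this choice at the outset is the first practical step before starting the computation.
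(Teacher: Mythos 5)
The paper never proves Proposition \ref{LieGroup1}: it is recalled as a known fact, with the reader sent to Chapter 18 of \cite{AgSa} and Section 7.6 of \cite{AgBaBo}, and only the subsequent passage from \eqref{HamSystLieG} to \eqref{HamSystLieG2} via the Killing form is carried out in the text. So there is no in-paper argument to compare with; what you propose (left trivialization of $T^*M$, or equivalently Lie--Poisson reduction) is exactly the standard proof found in those references, and your sign bookkeeping is consistent with the paper's conventions: with $\langle(\ad X)^*\mu,v\rangle=\langle\mu,[X,v]\rangle$ and the trivialized momentum obtained by left translation to the identity, one does get $+(\ad\,\D\Ham)^*$, which is what later yields $\dot{\eta}=[\eta,\D\Ham]$ and $\eta(t)=X(t)^{-1}\eta_0X(t)$. (Strictly speaking the sign is fixed by the choice of \emph{left} trivialization and the $\ad^*$ convention, not by the control-system convention $\dot X=XA(t)$, though the two are of course consistent.)

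One intermediate step is misstated and, taken literally, would fail: for a left-invariant $h$ the canonical term $\partial h/\partial q$ does \emph{not} vanish. For instance $h(\xi_q)=\langle\xi_q,L_{q*}C\rangle$ with fixed $C\in\calM$ is left-invariant, yet in matrix coordinates $h=\tr(\xi^{T}qC)$ visibly depends on $q$. What vanishes by left-invariance is the derivative of the \emph{trivialized} Hamiltonian $\Ham$ with respect to $q$ at fixed trivialized momentum $\eta$. Accordingly, when you differentiate $\eta(t)$ along the flow, both contributions are nonzero: the one coming from $\dot q=L_{q*}\D\Ham$ through the $q$-dependence of the trivialization, and the one coming from $\dot\xi=-\partial h/\partial q$; it is their \emph{sum} that assembles into $\langle\eta,[\D\Ham,v]\rangle$, whereas the derivative of the trivialization alone gives only the piece $\langle\eta,\D\Ham\,v\rangle$ (in matrix notation), i.e.\ half of the commutator. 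So either redo that computation keeping both terms, or lean on your second route, which is clean: the trivialized momentum functions $F_v(\xi_q)=\langle\xi_q,L_{q*}v\rangle$ satisfy $\{F_v,F_w\}=\pm F_{[v,w]}$ (sign fixed by the Poisson-bracket convention), and reading off the Hamiltonian vector field of $\Ham$ from this Lie--Poisson bracket gives $\dot\eta=(\ad\,\D\Ham)^*\eta$ directly. With that repair, your argument is a complete and correct substitute for the proof the paper delegates to the literature.
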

    In our particular case, that is $M=\SLD$ and $\calM=\sld$, we have that the Killing bilinear form on $\sld$ is non-degenerate, so it provides an intrinsic identification between $\calM$ and $\calM^*$.
    That is, for every $\xi\in \calM^*$ there is an $\e\in\calM$ such that 
    \begin{equation*}
	K(\e,v)
        =
        \langle 
            \xi, v
        \rangle 
        \quad 
        \forall 
        v\in \calM. 
    \end{equation*}
    So, one can compute
    \begin{multline*}
	K(\dot{\e},v)
	=
	\langle
		\dot{\xi},v
	\rangle
	=
	\langle
		(\mathrm{ad}\;\D\mathcal{H})^*\xi,v
	\rangle
	=
	\langle
		\xi,(\mathrm{ad}\;\D\mathcal{H})v
	\rangle
	=
	\\
	=
	\langle
		\xi,[\D\mathcal{H},v]
	\rangle
	=
	K(\e,[\D\mathcal{H},v])
	=
	K([\e,\D\mathcal{H}],v),
	\quad 
	\forall v \in T_{\mathbbm{1}}M.
    \end{multline*}
    So, the final form for the Hamiltonian system is 
	\begin{equation}
		\label{HamSystLieG2}
		\begin{cases}
				\dot{q}=L_{q*}\D\mathcal{H}, \\
			\dot{\e}=[\e,\D\mathcal{H}].
		\end{cases}
	\end{equation}
    Notice that the second equation does not involve $q$.
    \\
    Finally, concerning transversal condition for the Optimal Control Problem \eqref{max_prob}, we have the following result.
    \begin{lem}
	\label{lemma_transv}
	Let $\tr : \SLD \to \R$ be the trace operator on $\SLD$.
        Let us identify the solutions 
        $\xi(\cdot)\in\sld^*$
        and 
        $\e(\cdot)\in\sld$
        via the Killing form of $\sld$, that is 
        $
        \xi(t)
        =
        K(\e(t),\cdot)
        $. 
        Then 
	$$
	   \e(T)
            = 
            \frac{1}{4} 
            \left(
                X_{\tu}(T)
                -
                \frac{\tr(X_{\tu}(T))}{2}I
            \right).
	$$
    \end{lem}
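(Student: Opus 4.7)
The strategy is to unwind the transversal condition of Theorem \ref{PMP} step by step through the three identifications involved: the cotangent space of $\SLD$ at $X_{\tu}(T)$, the trivialized cotangent space $\sld^*$ via left translation, and finally $\sld$ itself through the Killing form. The cost is the smooth function $a=\tr:\SLD\to\R$, so PMP gives $\lambda(T)=\D_{X_{\tu}(T)}\tr$, which on a tangent vector $V\in T_{X_{\tu}(T)}\SLD$ acts simply by $V\mapsto \tr(V)$.

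Next I would trivialize. Every tangent vector at $X_{\tu}(T)$ is of the form $L_{X_{\tu}(T)\,*}M=X_{\tu}(T)M$ for a unique $M\in\sld$, so by the definition $\xi(T)=L_{X_{\tu}(T)}^*\lambda(T)$ (consistently with the trivialization used in Proposition \ref{LieGroup1}), one obtains
\begin{equation*}
    \langle \xi(T),M\rangle
    =
    \langle \lambda(T),X_{\tu}(T)M\rangle
    =
    \tr\bigl(X_{\tu}(T)M\bigr),
    \qquad M\in\sld.
\end{equation*}
The element $\e(T)\in\sld$ is then defined by $K(\e(T),M)=\langle \xi(T),M\rangle$ for all $M\in\sld$, i.e.\ $4\tr(\e(T)M)=\tr(X_{\tu}(T)M)$ for every $M\in\sld$.

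To turn this duality statement into the explicit formula, I would decompose $X_{\tu}(T)=\bigl(X_{\tu}(T)-\tfrac{\tr(X_{\tu}(T))}{2}\I\bigr)+\tfrac{\tr(X_{\tu}(T))}{2}\I$, where the first summand is traceless and the second is scalar. Since $\tr(M)=0$ for $M\in\sld$, the scalar part contributes nothing to $\tr(X_{\tu}(T)M)$, leaving
\begin{equation*}
    4\tr(\e(T)M)
    =
    \tr\!\left(\left(X_{\tu}(T)-\tfrac{\tr(X_{\tu}(T))}{2}\I\right)M\right),
    \qquad \forall M\in\sld.
\end{equation*}
Both $\e(T)$ and $X_{\tu}(T)-\tfrac{\tr(X_{\tu}(T))}{2}\I$ lie in $\sld$, and the bilinear form $(M_1,M_2)\mapsto \tr(M_1M_2)$ is non-degenerate on $\sld$ (it is a scalar multiple of the Killing form), so the claimed identity follows at once.

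The computation itself is routine; the only point that requires care is the bookkeeping of the three successive identifications (tangent/cotangent, left-translation trivialization, Killing-form duality) so that the numerical factor $4$ coming from $K(\cdot,\cdot)=4\tr(\cdot\,\cdot)$ ends up in the correct place. I do not expect a genuine obstacle. Note that, as already observed in Section 3, the adjoint equation \eqref{adj_syst} is invariant under dilation of $\e$, so the multiplicative constant $\tfrac14$ in front of $X_{\tu}(T)-\tfrac{\tr(X_{\tu}(T))}{2}\I$ is immaterial for the analysis of extremals carried out later in the paper; this is why equation \eqref{transv} is written without it.
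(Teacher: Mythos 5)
Your proof is correct and follows essentially the same route as the paper: compute $\langle \D_X \tr, XM\rangle=\tr(XM)$, pull back through the left trivialization, split $X_{\tu}(T)$ into its traceless and scalar parts, and conclude by non-degeneracy of $K(\cdot,\cdot)=4\tr(\cdot\,\cdot)$ on $\sld$. Your closing remark on dilation invariance explaining why the factor $\tfrac{1}{4}$ is dropped in \eqref{transv} is a sensible addition but not part of the paper's own proof.
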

    \begin{proof}
	Define $X=X_{\tu}(T)$ and let $U\in\sld$. Then
	\begin{align*}
		\langle
			\xi(T)
				,
			X U
		\rangle 
		=
		\langle
			\mathrm{d}_X \tr, XU 
		\rangle 
		=
		\frac{\D}{\D t}
		\left(
			\tr(
    				X e^{tU}
                )
		\right)_{|t=0}
            =
	    \tr(XU).
	\end{align*}
	Moreover 
        $\tr(XU)=\tr((X-\frac{\tr(X)}{2}I)U)$ 
        and 
        $X-\frac{\tr(X)}{2}I\in\sld$. 
        Recalling that 
        $K(U,V)=4\tr(VU)$ 
        for 
        $U,V\in\sld$ 
        and that $K$ is non degenerate bilinear form, 
        it follows that there exists a 
        $Y\in\sld$ such that 
        $
        K(Y,U)
        =
        \langle \D _X \tr , XU \rangle
        $ 
        for all $U\in\sld$. 
        Then 
	$$
		4\tr(YU)
            =
            \tr
            \left(
                \left(
                    X
                    -
                    \frac{\tr(X)}{2}I
                \right)U
            \right) 
            \quad 
            \text{for all }
            U\in\sld.
	$$
	Hence 
        $
        Y
        =
        \frac{1}{4}
        \left(
        X
        -
        \frac{\tr(X)}{2}
        I
        \right)
        $, 
        as we wished.
    \end{proof}
\end{section}

\newpage

\printbibliography

\end{document}